\documentclass[12pt,psamsfonts]{amsart}
\usepackage{amsmath}
\usepackage{amsthm}
\usepackage{amssymb}
\usepackage{amscd}
\usepackage{amsfonts}
\usepackage{amsbsy}
\usepackage{graphicx}
\usepackage[dvips]{psfrag}
\usepackage{array}
\usepackage{color}
\usepackage{epsfig}
\usepackage{url}

\newcolumntype{L}{>{\displaystyle}l}
\newcolumntype{C}{>{\displaystyle}c}
\newcolumntype{R}{>{\displaystyle}r}

\renewcommand{\arraystretch}{2}

\newcommand{\R}{\ensuremath{\mathbb{R}}}

\newcommand{\CC}{\mathcal{C}}
\newcommand{\CS}{\ensuremath{\mathcal{S}}}

\newcommand{\CL}{\ensuremath{\mathcal{L}}}

\newcommand{\CO}{\ensuremath{\mathcal{O}}}
\newcommand{\CZ}{\ensuremath{\mathcal{Z}}}
\newcommand{\ov}{\overline}
\newcommand{\la}{\lambda}

\newcommand{\T}{\theta}
\newcommand{\f}{\varphi}
\newcommand{\al}{\alpha}
\newcommand{\be}{\beta}

\newcommand{\s}{\ensuremath{\mathbb{S}}}

\newcommand{\C}{\ensuremath{\mathcal{C}}}

\newcommand{\U}{\ensuremath{\mathcal{U}}}

\newcommand{\x}{\mathbf{x}}

\newcommand{\de}{\delta}

\newcommand{\dis}{\mathrm{dis}}
\newcommand{\dint}{\displaystyle\int}

\def\p{\partial}
\def\e{\varepsilon}

\newtheorem {theorem} {Theorem} 
\newtheorem {proposition} [theorem] {Proposition}

\newtheorem {lemma} [theorem] {Lemma}

\newtheorem {remark} {Remark}
\newtheorem {claim} {Claim}
\newtheorem {mtheorem} {Theorem}

\textwidth=14.5truecm

\begin{document}
\renewcommand{\arraystretch}{1.5}

\author[J. Llibre and D.D. Novaes]
{Jaume Llibre$^1$ and Douglas D. Novaes$^{1,2}$}

\address{$^1$ Departament de Matematiques,
Universitat Aut\`{o}noma de Barcelona, 08193 Bellaterra, Barcelona,
Catalonia, Spain} \email{jllibre@mat.uab.cat, ddnovaes@mat.uab.cat}

\address{$^2$ Departamento de Matem\'{a}tica, Universidade Estadual de
Campinas, Rua S\'{e}rgio Buarque de Holanda, 651, Cidade Universit\'{a}ria
Zeferino Vaz, 13083--859, Campinas, SP, Brazil}
\email{dnovaes@ime.unicamp.com}

\title[Periodic solutions in discontinuous differential systems]
{On the periodic solutions of discontinuous\\ piecewise differential systems}

\let\thefootnote\relax\footnotetext{Corresponding author Douglas D. Novaes: Departament de Matematiques,
Universitat Aut\`{o}noma de Barcelona, 08193 Bellaterra, Barcelona,
Catalonia, Spain. Tel. +34 93 5811304, Fax. +34 93 5812790, email: ddnovaes@mat.uab.cat}

\subjclass[2010]{37G15, 34C29, 37C30}

\keywords{periodic solution, limit cycle, averaging theory,
Lyapunov--Schmidt reduction, discontinuous differential system}

\maketitle

\begin{abstract}
Motivated by problems coming from different areas of the applied
science we study the periodic solutions of the following
differential system
\[
x'(t)=F_0(t,x)+\e F_1(t,x)+\e^2 R(t,x,\e),
\]
when $F_0$, $F_1$, and $R$ are discontinuous piecewise functions,
and $\e$ is a small parameter. It is assumed that the manifold $\CZ$
of all periodic solutions of the unperturbed system $x'=F_0(t,x)$
has dimension $n$ or smaller then $n$. The averaging theory is one
of the best tools to attack this problem. This theory is completely
developed when $F_0$, $F_1$ and $R$ are continuous functions, and
also when $F_0=0$ for a class of discontinuous differential systems.
Nevertheless does not exist the averaging theory for studying the
periodic solutions of discontinuous differential system when
$F_0\neq0$. In this paper we develop this theory for a big class of
discontinuous differential systems.
\end{abstract}

\section{Introduction and statement of the main results}

\subsection{Introduction}

The study of the existence of invariant sets, particularly, periodic
solutions is very important for understanding the dynamics of a
differential system. One of the most important tools to detect such
sets is the averaging theory. A classical introduction to this tool
can be found in \cite{V,SVM}.

\smallskip

On the other hand the study of the discontinuous differential systems has it importance and motivation lying in some fields of the applied sciences. Many problems of physics, engineering, economics, and biology are modeled using differential equation with discontinuous right--hand side. For instance we may cite problems in control systems \cite{Bar}, impact and friction mechanics \cite{Br}, nonlinear oscillations \cite{AVK,M}, economics \cite{H,I}, and biology \cite{Ba,Kr}. Recent reviews appeared in \cite{physDspecial, ML}.

\smallskip

Despite to the importance of the discontinuous differential systems mentioned above, there still exist only a few analytical techniques to study the invariant sets of discontinuous differential systems. In \cite{LNT1} the averaging theory has been extended for the following class of discontinuous differential systems
\begin{equation}\label{intro1}
x'(t)=
\begin{cases}
\e F_1(t,x)+\e^2R_1(t,x,\e)\quad \mbox{if}\quad h(t,x)>0,\\
\e F_2(t,x)+\e^2R_2(t,x,\e)\quad \mbox{if}\quad h(t,x)<0.
\end{cases}
\end{equation}
where $F_1,F_2,R_1,R_2$ and $h$ are continuous functions, locally Lipschitz in the variable $x$, $T$--periodic in the variable $t$, and $h$ is a $\CC^1$
function having $0$ as a regular value. The results stated in \cite{LNT1} have been extensively used, see for instance the works \cite{LM1,LM2,N,LLM,LZ}.

\smallskip

In this paper we focus on the development and improvement of the averaging theory for studying periodic solutions of a much bigger class of discontinuous differential systems than in \eqref{intro1}. Regarding to the averaging theory for finding periodic solutions there are essentially three main theorems. In what follows we describe these theorems.

\smallskip

The first one is concerning about the study of the
periodic solutions of the periodic differential systems of the form
\[
x'=\e F_1(t,x)+\e^2 F_2(t,x)+\cdots+\e^m F_m(t,x)+\e^{m+1} R(t,x,\e),
\]
with $x\in\R^d$. For continuous differential systems, even for the non--differentiable ones, this theory is already completely developed (see for instance \cite{V,SVM,BL,GGL,LNT2}), and for discontinuous differential systems this theory is develop up to order $2$ in $\e$ (see \cite{LNT1,LMN}).

\smallskip

The other two theorems go back to the works of Malkin \cite{Ma} and Roseau \cite{Ro}. They studied the periodic solutions of the periodic differential systems of the form
\[
x'=F_0(t,x)+\e F_1(t,x)+\e^2 F_2(t,x)+\cdots+\e^m F_m(t,x)+\e^{m+1} R(t,x,\e),
\]
with $x\in\R^d$, distinguishing when the manifold $\CZ$ of all periodic solutions of the unperturbed system $x'=F_0(t,x)$ has dimension $d$ or smaller
then $d$. These theories are well developed for continuous differential systems (see for instance \cite{BFL,BGL,GGL,LNT2}). Nevertheless there is no theory for studying such problems in discontinuous differential systems. Thus our main objective in this paper is to develop these last theorems for a big class of discontinuous differential systems.

\smallskip

In subsections \ref{Prel} and \ref{stat} we describe the class of discontinuous differential systems that we shall consider in this paper together with our main results, and we also do an application. In section \ref{s2} we prove our main results, and in section \ref{PP} we describe carefully the application of our results.

\subsection{Preliminaries}\label{Prel}

We take the ODE's
\begin{equation}\label{A1}
x'(t)=F^n(t,x), \quad (t,x)\in \s^1 \times D \quad \textrm{for} \quad
n=1,2,\ldots,N,
\end{equation}
where $D\subset\R^d$ is an open subset and $\s^1=\R/T$ for some positive real number $T$.
Here $F^n: \s^1\times D \to \R^d$ for $n=1,2,\ldots,N$ are continuous functions and the prime denotes derivative with respect to the time $t$. For $n=1,2,\ldots,N$ let $S_n$ be open connected and disjoint subsets of $\s^1\times D$. The boundary of $S_n$ for $n=1,2,\ldots,N$ is assumed to be piecewise
$\CC^m$ embedded hypersurface with $m\geq1$ and the union of all these boundaries is denoted by $\Sigma$. Moreover we assume that $\Sigma$ and all $S_n$
together cover $\s^1\times D$. We call an {\it $N$--Discontinuous
Piecewise Differential System}, or simply a {\it DPDS}, when the context is clear, the following differential system
\begin{equation}\label{s1}
x'(t)=\left\{\begin{array}{LC}
F^1(t,x) & (t,x)\in \ov{S_1},\\
F^2(t,x) &(t,x)\in \ov{S_2},\\
&\vdots\\
F^N(t,x) & (t,x)\in \ov{S_N}.
\end{array}\right.
\end{equation}
Here $\ov{S_n}$ denotes the closure of $S_n$ in $\s^1\times D$.

\smallskip

Instead of working with system \eqref{s1} we can work with the following associated system.
\begin{equation}\label{ss1}
x'(t)=F(t,x)=\sum_{n=1}^N \chi_{\ov {S_n}}(t,x) F^n(t,x), \quad
(t,x)\in \s^1\times D,
\end{equation}
where for a given subset $A$ of $\s^1\times D$ the {\it
characteristic function} $\chi_A(t,x)$ is defined as
\[
\chi_A(t,x)=
\begin{cases}
1 \quad\textrm{if}\quad (t,x)\in A,\\
0 \quad\textrm{if}\quad (t,x)\notin A.
\end{cases}
\]
Systems \eqref{s1} and \eqref{ss1} does not coincides in $\Sigma$. Indeed system \eqref{s1} is multivalued in $\Sigma$ whereas system \eqref{ss1} is single valued in $\Sigma$. Using Filippov's convention for the solutions of the systems \eqref{s1} or \eqref{ss1} (see \cite{Fi}) passing through a point $(t,x)\in\Sigma$ we have that these solutions do not depend on the value $F(t,x)$. So the solutions of systems \eqref{s1} and \eqref{ss1} are the same.

\smallskip

When $F^n$ for $n=1,2,\ldots,N$ are $\CC^1$ functions we define the ``derivative'' of the discontinuous piecewise differentiable function $F(t,x)$ with respect to $x$ as
\begin{equation}\label{nota}
D_x F(t,x)=\sum_{n=1}^N \chi_{\ov {S_n}}(t,x) D_x F^n(t,x).
\end{equation}
We note that when the function $F(t,x)$ is differentiable with respect to the variable $x$ then the above definition coincides with the usual derivative.



\smallskip

We say that a point $p\in\Sigma$ is a {\it generic point of
discontinuity} if there exists a neighborhood $G_p$ of $p$ such that
$\CS_p=G_p\cap\Sigma$ is a $\CC^m$ embedded  hypersurface in $\s^1\times D$ with $m\geq 1$, such that the hypersurface $\CS_p$ splits
$G_p\backslash\CS_p$ in two disconnected regions, namely $G_p^+$ and
$G_p^-$, and the vector fields $F_p^+=F|_{G_p^+}$ and $F_p^-=F|_{G_p^-}$ are continuous. We define $l(p)$ as the segment connecting the
vectors $F_p^+(p)$ and $F_p^-(p)$ when these have the same origin (see Figures \ref{fig1} and
\ref{fig2}). 

\smallskip

Let $\CS\subset \Sigma$ be an embedded hypersurface in $\s^1\times D$ and $T_p \CS$ denotes the tangent space of $\CS$ at the point $p$. In what follows we define the {\it crossing region} $\Sigma^c(\CS)$ (see Figure \ref{fig1}), and the {\it sliding region} $\Sigma^s(\CS)$ (see Figure \ref{fig2}) of the hypersurface$\CS$.
\[
\begin{array}{CCC}
\Sigma^c(\CS)=\left\{p\in \CS:\, l(p)\cap T_p \CS=\emptyset \right\}
&\textrm{and}& \Sigma^s(\CS)=\left\{p\in \CS:\, l(p)\cap T_p
\CS\neq\emptyset \right\}.
\end{array}
\]
These definitions only make sense when the linear space $T_p\CS$ is based at the origin of the vectors $F^+_p(p)$ and $F^-_p(p)$.

\smallskip

The hypersurface $\CS\subset\Sigma$ can be decomposed as the
union of the closure of its {\it crossing region} with its {\it sliding region}. 

\smallskip

When the hypersurface $\CS\subset\Sigma$ is given by $\CS=h^{-1}(0)$ for some $\CC^1$ function $h:\s^1\times D\rightarrow\R$ having $0$ as a regular value, then the above definitions becomes
\[
\begin{array}{L}
\Sigma^c(\CS)=\left\{p\in \CS: \langle \nabla
h(p),(1,F^+(p))\rangle\langle \nabla h(p),(1,F^-(p))\rangle>0 \right\}\quad
\textrm{and}\\ \Sigma^s(\CS)=\left\{p\in \CS:\, \langle \nabla
h(p),(1,F^+(p))\rangle\langle \nabla h(p),(1,F^-(p))\rangle<0  \right\}.
\end{array}
\]

\begin{figure}[h]
\centering
\psfrag{T}{$T_p\Sigma$}
\psfrag{M}{$\CS_p$}
\psfrag{p}{$p$}
\psfrag{vj}{$F_p^-(p)$}
\psfrag{vi}{$F_p^+(p)$}
\psfrag{l}{$l(p)$}
\psfrag{Aj}{$G_p^-$}
\psfrag{Ai}{$G_p^+$}
\includegraphics[width=14cm]{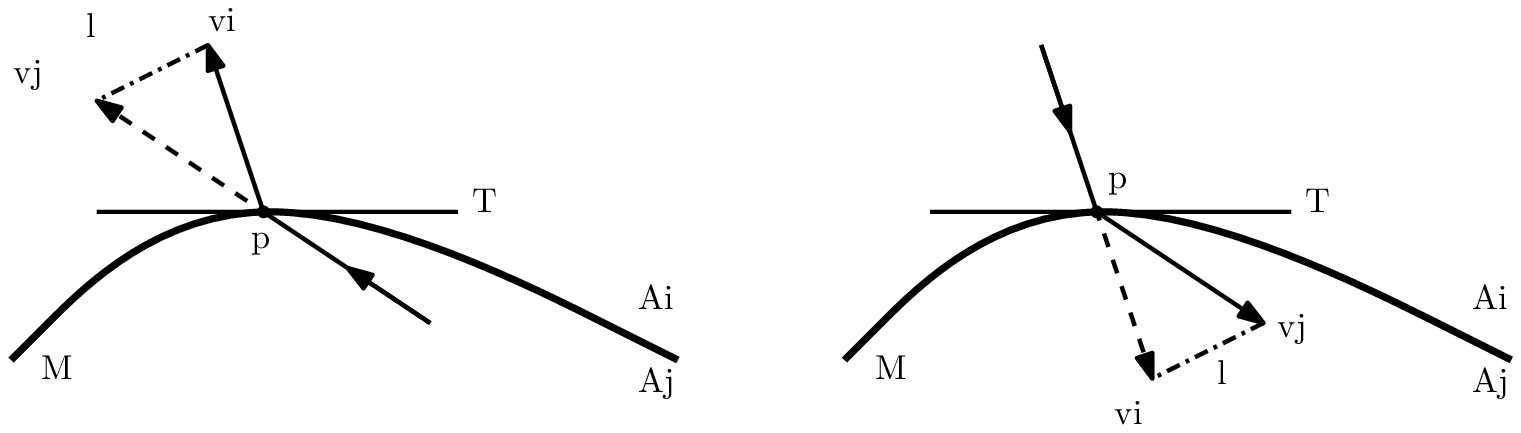}
\vskip 0cm \centerline{} \caption{\small \label{fig1} Crossing
region of $\CS$: $\Sigma^c\CS$. }
\end{figure}

\begin{figure}[h]
\centering
\psfrag{T}{$T_p\Sigma$}
\psfrag{M}{$\CS_p$}
\psfrag{p}{$p$}
\psfrag{vj}{$F_p^-(p)$}
\psfrag{vi}{$F_p^+(p)$}
\psfrag{l}{$l(p)$}
\psfrag{Aj}{$G_p^-$}
\psfrag{Ai}{$G_p^+$}
\includegraphics[width=14cm]{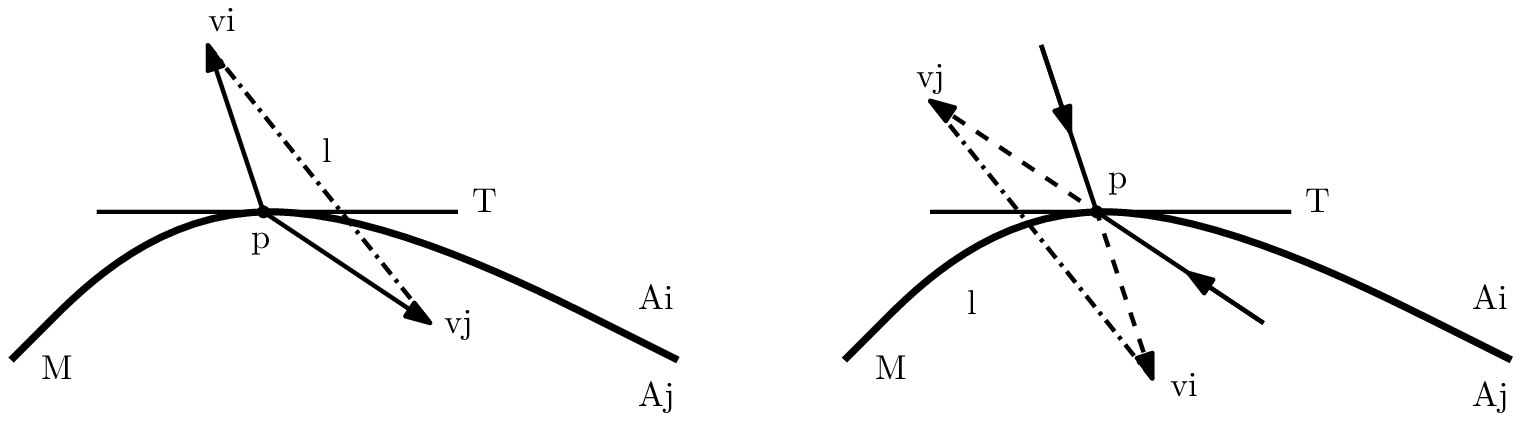}
\vskip 0cm \centerline{} \caption{\small \label{fig2} Sliding region
of $\CS$: $\Sigma^s\CS$. }
\end{figure}

\smallskip

Globally we define the {\it crossing region} $\Sigma^c$ as the generic
points of discontinuity $p$ such that $p\in\Sigma^c(\CS_p)$. The {\it sliding region} $\Sigma^s$ is defined analogously. Later on this paper for a point $p\in \Sigma^c$ we shall denote $T_p\Sigma=T_p\CS_p$.

\smallskip

Let $\f_{F^n}(t,q)$ be the solution of system \eqref{A1} passing
through the point $q\in S_n$ at time $t=0$, i.e. $\f_{F^n}(0,q)=
q$. The local solution $\f_{F}(t,q)$ of system \eqref{ss1} passing
through a point $p\in\Sigma^c$ at time $t=0$ is given by the
Filippov convention, i.e. for $p\in\Sigma^c$ such that $l(p)\subset
G_p^+$ and taking the origin of time at $p$, the trajectory through
$p$ is defined as $\f_F(t,p)=\f_{F_p^-}(t,p)$ for $t\in
I_p\cap\{t<0\}$, and $\f_F(t,p)=\f_{F_p^+}(t,p)$ for $t\in
I_p\cap\{t>0\}$. Here $I_p$ is an open interval having
the $0$ in its interior. For the case $l(p)\subset G_p^-$ the definition is
the same reversing the time.

\smallskip

Assuming that the functions $F^n(t,x)$ are Lipschitz in the variable $x$ for $n=1,2,\ldots,N$, the results on Filippov systems (see \cite{Fi}) guarantee the uniqueness of the solutions reaching the set of discontinuity only at points of $\Sigma^c$.

\subsection{Statements of the main results}\label{stat}

Let $D$ be an open subset of $\R^d$ and for $n=1,2,\ldots,N$ let $F_0^n:\s^1\times D\rightarrow\R^d$ be a $\C^m$ function with $m\geq1$, and $F_1^n:\s^1\times D\rightarrow\R^d$, and $R^n:\s^1\times D\times[0,1]\rightarrow\R^d$ be continuous functions which are Lipschitz in the second variable. All these functions can be seen as $T$--periodic functions in the variable $t$ when $t\in\R$. Later on in this paper we shall assume more conditions under these functions.

\smallskip

Now taking
\[
\begin{aligned}
&F_i(t,x)=\sum_{n=1}^N \chi_{\ov {S_n}}(t,x) F_i^n(t,x), \quad
\textrm{for $i=0,1$, and}\\ &R(t,x,\e)=\sum_{n=1}^N \chi_{\ov
{S_n}}(t,x) R^n(t,x),
\end{aligned}
\]
we consider the following DPDS,
\begin{equation}\label{MRs1}
x'(t)=F_0(t,x)+\e F_1(t,x)+\e^2R(t,x,\e).
\end{equation}
The parameter $\e$ is assumed to be small. We recall that $\Sigma$ denotes the union of the boundaries of $S_n$ for $n=1,2,\ldots,N$. 

\smallskip

In order to present our main results we have to introduce more definitions and notation.

\smallskip

For $z\in D$  and $\e>0$ sufficiently small we denote by $x(\cdot,z,\e):[0,t_{(z,\e)})\rightarrow \R^d$ the solution of system \eqref{MRs1} such that $x(0,z,\e)=z$. Given a subset $B$ of $D$ we define $\widetilde{B}^{\e}=\ov{\{(t,x(t,z,\e)):\,z\in B, t\in [0,t_{(z,\e)})\}}$. 

\smallskip

We denote by $\Sigma_0$ the set of points $x\in D$ such that the function $F(0,x)$ is discontinuous, clearly $\{0\}\times\Sigma_0\subset\Sigma$. 

\smallskip

One of the main hypothesis of this paper is that the unperturbed system
\begin{equation}\label{ups}
x'(t)=F_0(t,x),
\end{equation}
has a manifold $\CZ$ embedded in $D\backslash \p \Sigma_0$ such that the solutions starting in $\CZ$ are all $T$--periodic functions and reach the set of discontinuity $\Sigma$ only at its crossing region $\Sigma^c$. Here $\p \Sigma_0$ denotes the boundary of $\Sigma_0$ with respect to topology of $D$. Precisely,
\begin{itemize}
\item[($H$)] let $\CZ=\{z_{\al}=(\al,\be_0(\al)):\,\al\in\ov V\}$, where $V$ is an open and bounded subset of $\R^k$, and $\beta_0:\ov V\rightarrow\R^{d-k}$ is a $\C^m$ function with $m\geq 1$. We shall assume that $\CZ\subset D$, $\CZ\cap \p\Sigma_0=\emptyset$, $\widetilde{\CZ}^0\cap\Sigma\subset\Sigma^c$ and for each $z_{\al}\in \CZ$ the unique solution $x_{\al}(t)=x(t,z_{\al},0)$ is $T$--periodic. 
\end{itemize}

\begin{remark}
Suppose that the solution $x_{\al}(t)$ reaches the set $\Sigma^c$ $\kappa_{\al}$ times. The assumption $\CZ\cap \p\Sigma_0=\emptyset$ in hypothesis $(H)$ implies that for each $z_{\al}\in \CZ$ there exists a small neighborhood $U_{\al}\subset D$ of $z_{\al}$ such that for $\e>0$ sufficiently small every solution of the perturbed system \eqref{MRs1} starting in $U_{\al}$ reach the crossing region of the set of discontinuity $\Sigma^c$ also $\kappa_{\al}$ times. This fact will be well justified in the proofs of Lemmas \ref{l2} and \ref{l3} in section \ref{s2}
\end{remark}


 
For $z\in D$ we take the following discontinuous piecewise linear differential system
\begin{equation}\label{lin}
y'=D_xF_0(t,x(t,z,0))\,y,
\end{equation}
which can be seen as the linearization of the unperturbed system \eqref{ups} along the solution $x(t,z,0)$. We note that for each $z\in D$ the function $t\mapsto D_xF_0(t,x(t,z,0))$ is piecewise $\C^m$ with $m\geq 1$, so we can consider a fundamental matrix $Y(t,z)$ of the differential system \eqref{lin}. 
Clearly $t\mapsto Y(t,z)$ is continuous piecewise $\C^m$ function. We define
\begin{equation}\label{y1}
y_1(t,z)=Y(t,z)\int_0^t Y(s,z)^{-1}F_1(s,x(s,z,0))ds.
\end{equation}
Now for $z_{\al}\in \CZ$ we denote $Y_{\al}(t)=Y(t,z_{\al})$. Let $\pi:\R^k\times \R^{d-k}\rightarrow \R^k$ and $\pi^{\perp}:\R^k\times \R^{d-k}\rightarrow \R^{d-k}$ be the projections onto the first $k$ coordinates and onto the last $d-k$ coordinates, respectively. Thus we define the averaged function $f_1: \ov V\rightarrow\R^k$ as
\begin{equation}\label{f1}
f_1(\al)=\pi y_1(T,z_{\al}).
\end{equation}

In what follows $\dis(x,A)$ denotes the Hausdorff distance function between a point $x\in D$ and a set $A\subset D$, and as usual the function $d_B(f_1,W,0)$ denotes the Brouwer degree (see for instance \cite{B} for details on the Brouwer degree). Our main result on the periodic solutions of DPDS \eqref{MRs1} is the following.

\begin{mtheorem}\label{MRt1}
In addition to the hypothesis $(H)$ we assume that
\begin{itemize}
\item[$(H1)$] for $n=1,2,\ldots,N$, the functions
$F_0^n$ and $\beta_0$ are of class $\CC^1$; the continuous functions $D_x F_0^n$, $F_1^n$ and $R$ are locally
Lipschitz with respect to $x$; and the boundary of $S_n$ are piecewise $\CC^1$ embedded hypersurface in $\R\times D$;

\item[$(H2)$] for any $\al\in\ov V$ there exists a fundamental matrix solution $Y(t,z)$ of \eqref{lin} such that the matrix $Y_{\al}(T)Y_{\al}(0)^{-1}-Id$ has in the upper right corner the null $k\times(d-k)$ matrix, and in the lower right corner has the $(n-k)\times(n-k)$ matrix $\Delta_{\al}$ with $\det(\Delta_{\al})\neq0$;

\item[$(H3)$]  for an open subset $U$ of $D$ such that $\CZ\subset U$ we have that $(0,y_1(s,z))\in T_{(s,x(s,z,0))}\Sigma$ whenever $(s,x(s,z,0))\in\Sigma^c$ for $(s,z)\in\s^1\times U$;

\item[$(H4)$] there exists $W$ open subset of $V$ such that $f_1(\al)\neq 0$ for $\al\in\p W$ and $d_B(f_1,W,0)\neq 0$.
\end{itemize}
Then for $\e>0$ sufficiently small, there exists a $T$--periodic
solution $\f(t,\e)$ of system \eqref{MRs1} such that $\dis(\f(0,\e),\CZ)\to
0$ as $\e\to 0$.
\end{mtheorem}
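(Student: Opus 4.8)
The plan is to use a Lyapunov--Schmidt reduction applied to the time-$T$ (Poincar\'e) map of system \eqref{MRs1}, following the classical strategy for the Malkin--Roseau theorem but carefully adapting it to the discontinuous setting. First I would establish that for $z$ in a neighborhood of $\CZ$ and $\e>0$ small the solution $x(t,z,\e)$ is well-defined on $[0,T]$: by hypothesis $(H)$ the unperturbed solutions through $\CZ$ meet $\Sigma$ only at crossing points, so by continuous dependence and the transversality that defines $\Sigma^c$ (the open condition $\langle\nabla h,(1,F^\pm)\rangle\langle\nabla h,(1,F^\mp)\rangle>0$), nearby perturbed solutions still cross $\Sigma$ transversally the same number $\kappa_\alpha$ of times, as anticipated in the Remark. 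This is exactly the content promised for Lemmas~\ref{l2} and~\ref{l3}, and I would invoke those to obtain that $z\mapsto x(T,z,\e)$ is well-defined and, crucially, $\C^1$ in $z$ (with one-sided care at $\Sigma_0$, which is why $\CZ\cap\partial\Sigma_0=\emptyset$ is assumed) with the expansion
\[
x(T,z,\e)=x(T,z,0)+\e\, y_1(T,z)+\CO(\e^2),
\]
where $y_1$ is the variation-of-parameters expression \eqref{y1}; here hypothesis $(H3)$ is what guarantees that the first-order variation $y_1$ does not produce a spurious $\Sigma$-crossing contribution, i.e.\ the naive linearization along the broken flow is legitimate because $(0,y_1)$ stays tangent to $\Sigma$ at the crossing instants.

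Next I would set up the displacement (bifurcation) function
\[
\Phi(z,\e)=x(T,z,\e)-z.
\]
Zeros of $\Phi(\cdot,\e)$ correspond to initial conditions of $T$-periodic solutions of \eqref{MRs1}. Restricting attention to $z=z_\alpha\in\CZ$ as base points, hypothesis $(H)$ gives $\Phi(z_\alpha,0)=0$ for all $\alpha\in\ov V$, so $\CZ$ is a $k$-dimensional manifold of zeros of $\Phi(\cdot,0)$. The linearization $D_z\Phi(z_\alpha,0)=Y_\alpha(T)Y_\alpha(0)^{-1}-Id$; by $(H2)$ this matrix has the block structure
\[
\begin{pmatrix} * & 0 \\ * & \Delta_\alpha \end{pmatrix}
\]
with $\det\Delta_\alpha\neq0$, so its kernel is controlled by the first $k$ coordinates and the last $d-k$ directions are nondegenerate. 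This is precisely the setting for a Lyapunov--Schmidt reduction: I would split $\R^d=\R^k\times\R^{d-k}$ via $\pi,\pi^\perp$, and use the implicit function theorem on the $\pi^\perp$-component equation $\pi^\perp\Phi=0$ to solve for the last $d-k$ coordinates $\beta$ as a function $\beta=\beta(\alpha,\e)$ near $\beta_0(\alpha)$, using the invertibility of $\Delta_\alpha$. Substituting back, the surviving $k$-dimensional equation is $g(\alpha,\e):=\pi\,\Phi\big((\alpha,\beta(\alpha,\e)),\e\big)=0$, with $g(\alpha,0)\equiv0$.

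Then I would compute $g(\alpha,\e)=\e\, f_1(\alpha)+\CO(\e^2)$, where $f_1(\alpha)=\pi\, y_1(T,z_\alpha)$ as in \eqref{f1}; the key point making the first-order term come out this clean is that the upper-right block of $D_z\Phi(z_\alpha,0)$ vanishes (hypothesis $(H2)$ again), so the correction $\beta(\alpha,\e)-\beta_0(\alpha)=\CO(\e)$ does not feed into the $\pi$-component at order $\e$. Dividing by $\e$, solving $g(\alpha,\e)/\e=0$ amounts to finding zeros of an $\e$-perturbation of $f_1$; since by $(H4)$ we have $f_1\neq0$ on $\partial W$ and $d_B(f_1,W,0)\neq0$, the topological-degree continuation argument (homotopy invariance and the solution property of the Brouwer degree) yields, for $\e>0$ small, a zero $\alpha_\e\in W$, hence a $T$-periodic solution $\f(t,\e)$ of \eqref{MRs1} with initial point $(\alpha_\e,\beta(\alpha_\e,\e))$ converging to $z_{\alpha_\e}\in\CZ$, i.e.\ $\dis(\f(0,\e),\CZ)\to0$. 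One must also check the periodic solution genuinely lives in the crossing regime (no sliding is created) for small $\e$, which again follows from the transversality of $\Sigma^c$ and the Remark.

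The main obstacle, and where the bulk of the real work lies, is establishing the $\C^1$-smoothness and the $\e$-expansion of the Poincar\'e map $z\mapsto x(T,z,\e)$ across a sequence of transversal $\Sigma$-crossings --- that is, proving Lemmas~\ref{l2} and~\ref{l3}. Each crossing contributes a ``saltation''/transition map, and one must show these transition maps depend $\C^1$ on $(z,\e)$ and contribute nothing at first order in $\e$ under $(H3)$; this is exactly the delicate discontinuity bookkeeping that the continuous Malkin--Roseau theory does not have to face, and is the technical heart of the paper. Once those lemmas are in hand, the Lyapunov--Schmidt reduction and the degree argument are essentially the standard machinery.
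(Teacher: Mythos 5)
Your strategy matches the paper's: define the displacement map $f(z,\e)=x(T,z,\e)-z$, establish its first-order $\e$-expansion via the broken variational equation, and close via a Lyapunov--Schmidt reduction together with Brouwer degree. You also correctly identify that the technical heart is proving the expansion of the Poincar\'e map across transversal crossings (the paper's Lemma~\ref{l1}) and that $(H3)$ is precisely the tangency condition that prevents the crossing times from contributing at first order.

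One genuine discrepancy worth flagging: you write $x(T,z,\e)=x(T,z,0)+\e\,y_1(T,z)+\CO(\e^2)$ and then perform the reduction by ``the implicit function theorem on the $\pi^\perp$-component equation.'' Under the Theorem~\ref{MRt1} hypothesis $(H1)$ the perturbation data are only locally Lipschitz in $x$, so the paper can only prove an $o(\e)$ remainder (Lemma~\ref{l1}) and only that $z\mapsto x(T,z,\e)$ is locally Lipschitz in $z$ (Lemma~\ref{l3}); the quotient $Q(z,\e)=(x(T,z,\e)-x(T,z,0))/\e$ is thus merely continuous in $\e$ and Lipschitz in $z$, not $\C^1$. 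A bare-hands IFT reduction of $\pi^\perp f=0$ therefore does not directly apply. The paper instead invokes a pre-packaged Lipschitz version of the Lyapunov--Schmidt reduction (Lemma~\ref{LS}, from Buic\u{a}--Llibre--Makarenkov), which requires $P\in\C^1$ but only Lipschitz continuity of $Q$ and replaces the IFT/nondegeneracy step for the reduced $k$-dimensional equation by the Brouwer degree hypothesis $(H4)$ directly. Your IFT-and-$\CO(\e^2)$ version is exactly the argument that goes through under the strengthened $\C^2$ hypothesis $(h1)$ of Theorem~\ref{MRt2} (via Lemma~\ref{l4} and Lemma~\ref{LS1}), so the route you describe is the correct one for the smooth companion theorem but needs to be swapped for the Lipschitz Lyapunov--Schmidt lemma to actually prove Theorem~\ref{MRt1} as stated.
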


Theorem \ref{MRt1} is proved in Section \ref{s2}.

\begin{remark}\label{bc}
When $f_1$ is a $\C^1$ function the assumption
\begin{itemize}
\item[(h4)] there exists $a\in V$ such that $f_1(a)=0$ and $\det(f_1'(a))\neq0$,
\end{itemize}
is a sufficient condition to guarantees the validity of the hypothesis $(H4)$.
\end{remark}

\begin{mtheorem}\label{MRt2}
We suppose that the hypotheses $(H)$, $(H2)$ and $(H3)$ of Theorem \ref{MRt1} hold. If we assume that 
\begin{itemize}
\item[$(h1)$] for $n=1,2,\ldots,N$, $F_0^n$, $D_x F_0^n$, $F_1^n$, $R^n$, and $\beta_0$ are $\CC^2$ functions and the boundary of $S_n$ are piecewise $\CC^2$ embedded hypersurface in $\R\times D$,
\end{itemize}
then $f_1(\al)$ is a $\CC^1$ function  for every $\al\in\ov V$. Moreover, if we assume in addition that hypothesis $(h4)$ holds, then for $\e>0$ sufficiently small, there exists a $T$--periodic
solution $\f(t,\e)$ of system \eqref{MRs1} such that $\f(0,\e)\to
z_a$ as $\e\to 0$.
\end{mtheorem}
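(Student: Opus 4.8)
The plan is to derive Theorem \ref{MRt2} from Theorem \ref{MRt1}. First note that $(h1)$ implies $(H1)$: a $\CC^2$ function is $\CC^1$ and locally Lipschitz, $D_xF_0^n\in\CC^2$ is locally Lipschitz, and a piecewise $\CC^2$ embedded hypersurface is piecewise $\CC^1$. So two tasks remain: (a) show that under $(h1)$ the averaged function $f_1$ of \eqref{f1} is of class $\CC^1$ on $\ov V$; (b) show that, adding $(h4)$, hypothesis $(H4)$ holds, apply Theorem \ref{MRt1}, and strengthen the conclusion from $\dis(\f(0,\e),\CZ)\to0$ to $\f(0,\e)\to z_a$.

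For (a), recall from $(H)$ and the Remark following it that, for $\al\in\ov V$, the solution $x(t,z_\al,0)$ meets $\Sigma$ only in its crossing region, transversally, at finitely many instants $0<t_1(\al)<\cdots<t_{\kappa_\al}(\al)<T$, the same being true for initial conditions $z$ in a neighbourhood $U_\al$ of $z_\al$. Writing each local branch of $\Sigma$ near the $j$-th crossing as $h_j^{-1}(0)$ with $h_j\in\CC^2$ and $0$ a regular value, the crossing condition $\langle\nabla h_j,(1,F_0^{\pm})\rangle\neq0$ and the implicit function theorem give the crossing times $t_j(z)$ as $\CC^2$ functions of $z$ on $U_\al$. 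Hence, for $t\in[0,T]$ and $z\in U_\al$, the Filippov flow $\f_{F_0}(t,z)$ of \eqref{ups} is a finite composition of the $\CC^2$ local flows $\f_{F_0^n}$ and of the $\CC^2$ evaluation maps $z\mapsto\f_{F_0^n}(t_j(z),z)$, so $z\mapsto x(t,z,0)$ is $\CC^1$, uniformly for $t\in[0,T]$. Consequently the coefficient $t\mapsto D_xF_0(t,x(t,z,0))$ of \eqref{lin} is, between consecutive crossing times, $\CC^1$ in $z$ (using $D_xF_0^n\in\CC^2\subset\CC^1$), and since $Y(t,z)$ is obtained by solving a linear system on each such interval with continuity imposed at the $\CC^2$-varying crossing times, both $Y(T,z)$ and $s\mapsto Y(s,z)^{-1}$ are $\CC^1$ in $z$. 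Finally, under $(h1)$ the function $F_1$ is piecewise $\CC^2$, so $z\mapsto Y(s,z)^{-1}F_1(s,x(s,z,0))$ is $\CC^1$ off the (finitely many, $\CC^2$-varying) crossing instants, with bounded $z$-derivative; differentiating the integral in \eqref{y1} by a piecewise Leibniz rule—whose boundary contributions from the crossing times are continuous in $z$—shows $z\mapsto y_1(T,z)$ is $\CC^1$, and therefore $f_1(\al)=\pi\,y_1\bigl(T,(\al,\beta_0(\al))\bigr)$ is $\CC^1$ on $\ov V$ since $\beta_0\in\CC^2$. I expect this step to be the main obstacle: one must carefully establish the $\CC^1$ dependence on $z$ of the crossing times, of $x(t,z,0)$, of the fundamental matrix $Y(t,z)$, and of $y_1(t,z)$, across the transversal discontinuities; the remaining steps are routine.

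For (b), with $f_1\in\CC^1$ and $(h4)$ in force, we first verify $(H4)$, which is the content of Remark \ref{bc}: by the inverse function theorem $a\in V$ is an isolated zero of $f_1$, so choosing $W$ an open ball about $a$ with $\ov W\subset V$ containing no other zero of $f_1$, we have $f_1\neq0$ on $\p W$ and the local formula for the Brouwer degree gives $d_B(f_1,W,0)=\sgn\det f_1'(a)\in\{-1,1\}\neq0$. Theorem \ref{MRt1} then yields, for $\e>0$ small, a $T$-periodic solution $\f(t,\e)$ of \eqref{MRs1} with $\dis(\f(0,\e),\CZ)\to0$. To sharpen this, inspect the proof of Theorem \ref{MRt1}: the periodic solution is $\f(t,\e)=x(t,z(\e),\e)$ where $z(\e)=z_{\al(\e)}+O(\e)$ and $\al(\e)\in\ov W$ is a zero of a $\CC^1$ bifurcation map of the form $f_1+O(\e)$ detected by the nonzero degree on $W$. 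Since $\det f_1'(a)\neq0$, the implicit function theorem provides a unique branch of zeros of this map through $(a,0)$, so $\al(\e)\to a$, whence $z(\e)\to(a,\beta_0(a))=z_a$, i.e. $\f(0,\e)\to z_a$. This completes the proof.
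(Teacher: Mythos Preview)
Your approach differs from the paper's in that you try to derive Theorem~\ref{MRt2} from Theorem~\ref{MRt1}, whereas the paper proceeds independently. Your part (a) is essentially correct and mirrors the regularity analysis in the paper's Lemma~\ref{l4}, which shows under $(h1)$ that $(z,\e)\mapsto x(T,z,\e)$ is $\CC^2$ by the same implicit-function argument for the crossing times followed by composition of $\CC^2$ local flows; from this, $y_1(T,z)=(\partial x/\partial\e)(T,z,0)$ is $\CC^1$ in $z$ and hence so is $f_1$. Your direct analysis of \eqref{y1} reaches the same conclusion, though the paper's route is more economical because the $\CC^2$ regularity of the full displacement map is precisely what is needed next.

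Part (b) has a genuine gap. Theorem~\ref{MRt1} yields only $\dis(\f(0,\e),\CZ)\to 0$; to upgrade this to $\f(0,\e)\to z_a$ you assert that the periodic solution arises from a zero $\al(\e)$ of ``a $\CC^1$ bifurcation map of the form $f_1+O(\e)$'' and then invoke the implicit function theorem. But the IFT requires that reduced map to be $\CC^1$ \emph{jointly} in $(\al,\e)$, and nothing you have proved supplies the $\e$-regularity: your part (a) establishes only $\CC^1$ dependence on $\al$, while the proof of Theorem~\ref{MRt1} (through Lemma~\ref{LS}) uses only that $Q(z,\e)=(x(T,z,\e)-x(T,z,0))/\e$ is Lipschitz in $z$ and continuous in $\e$. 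To obtain the joint regularity needed for IFT one must show $x(T,z,\e)$ is $\CC^2$ in $(z,\e)$, which is exactly the content of Lemma~\ref{l4}. The paper therefore does not pass through Theorem~\ref{MRt1} at all: it applies instead a separate $\CC^2$ Lyapunov--Schmidt reduction (Lemma~\ref{LS1}, taken from \cite{BGL}) to the displacement map $f(z,\e)=x(T,z,\e)-z$, whose conclusion is directly $z_{\al_\e}\to z_a$. Your reduction to Theorem~\ref{MRt1} cannot be completed without essentially reproving Lemma~\ref{l4} and the substance of Lemma~\ref{LS1}.
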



\smallskip

In what follows we provide an application of Theorems \ref{MRt1} and \ref{MRt2}. We study the existence of limit cycles which bifurcate from the periodic solutions of the linear differential system $(\dot{u},\dot{v},\dot{w})=(-v,u,w)$ perturbed inside the class of all discontinuous piecewise linear differential systems with two zones separated by the plane $\Sigma=\{v=0\}\subset\R^3$, i.e.
\begin{equation}\label{lep}
\left(\begin{array}{C}
\dot{u}\\
\dot{v}\\
\dot{w}\\
\end{array}\right)=\left\{\begin{array}{L}

\left(\begin{array}{L}
-v+\e(a_1^++b_1^+ u+c_1^+ v+d_1^+ w)\\
u+\e(a_2^++b_2^+ u+c_2^+ v+d_2^+ w)\\
w+\e(a_3^++b_3^+ u+c_3^+ v+d_3^+ w)
\end{array}\right) \quad \textrm{if} \quad v>0,\vspace{0.2cm}\\

\left(\begin{array}{L}
-v+\e(a_1^-+b_1^- u+c_1^- v+d_1^- w)\\
u+\e(a_2^-+b_2^- u+c_2^- v+d_2^- w)\\
w+\e(a_3^-+b_3^- u+c_3^- v+d_3^- w)
\end{array}\right) \quad \textrm{if} \quad v<0.

\end{array}\right.
\end{equation}

Our result on the existence of a limit cycle of system \eqref{lep} is the following.

\begin{proposition}\label{p1}
If $(a_2^--a_2^+)(b_1^- +b_1^+ +c_2^- +c_2^-)>0$, then for $|\e|>0$ sufficiently small there exists a periodic solution $(u(t,\e),v(t,\e),w(t,\e))$ of system \eqref{lep} such that $w(0,\e)\to 0$  when $\e\to 0$. Moreover, we can find $(u^*,v^*)\in\R^2$ such that 
\[
||(u^*,v^*)||=\dfrac{4(a_2^- - a_2^+)}{\pi(b_1^-+b_1^++c_2^-+c_2^+)},
\] 
and $(u(0,\e),v(0,\e))\to(u^*,v^*)$ when $\e\to 0$.
\end{proposition}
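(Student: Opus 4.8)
The plan is to apply Theorem~\ref{MRt2} to system~\eqref{lep}, so the bulk of the work is identifying the ingredients $F_0$, $F_1$, $\CZ$, the fundamental matrix $Y_\al(t)$, and the averaged function $f_1$, and then checking that hypotheses $(H)$, $(H2)$, $(H3)$, $(h1)$ and $(h4)$ hold. Here $d=3$, $T=2\pi$, the discontinuity set is $\Sigma=\{v=0\}$, and the unperturbed system is $(\dot u,\dot v,\dot w)=(-v,u,w)$, whose flow is $\f_{F_0}(t,(u_0,v_0,w_0))=(u_0\cos t-v_0\sin t,\ u_0\sin t+v_0\cos t,\ w_0 e^{t})$. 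The only $2\pi$--periodic solutions are those with $w_0=0$, so the manifold of periodic orbits is two--dimensional ($k=2$): I would take $V\subset\R^2$ an annulus in the $(u,v)$--plane and $\CZ=\{z_\al=(\al,0):\al\in\ov V\}$, i.e.\ $\beta_0\equiv0$. Each such orbit is a circle of radius $\|\al\|$ in the plane $w=0$, crossing $\Sigma=\{v=0\}$ transversally twice per period, so $\widetilde{\CZ}^{\,0}\cap\Sigma\subset\Sigma^c$ and $\CZ\cap\p\Sigma_0=\emptyset$ (indeed $\Sigma_0=\{v=0\}$ as a subset of $\R^3$, and $\CZ$ avoids its boundary in $D$), which gives $(H)$.

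Next I would compute the linearization~\eqref{lin}. Since $F_0$ is actually \emph{linear} and continuous here, $D_xF_0$ is the constant matrix $A=\begin{pmatrix}0&-1&0\\1&0&0\\0&0&1\end{pmatrix}$ on both zones, so $Y_\al(t)=e^{At}=\begin{pmatrix}\cos t&-\sin t&0\\ \sin t&\cos t&0\\0&0&e^{t}\end{pmatrix}$ independently of $\al$. Then $Y_\al(2\pi)Y_\al(0)^{-1}-Id=e^{2\pi A}-Id$ has the block form required by $(H2)$: the upper--left $2\times2$ block is the zero matrix, the upper--right $2\times1$ block is zero, and the lower--right $1\times1$ block is $\Delta_\al=e^{2\pi}-1\neq0$; so $(H2)$ holds. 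For $(H3)$, note $\Sigma^c$ sits inside $\{v=0\}$ with normal $(0,0,1,0)$ in $(t,u,v,w)$ coordinates, so $T_p\Sigma$ is $\{(\tau,\xi,0,\omega)\}$; since the second component of $y_1(s,z)$ — which I must check — vanishes at the crossing times, $(0,y_1(s,z))$ lies in $T_p\Sigma$. Concretely, at a crossing time $s$ the solution $x(s,z,0)$ has zero $v$--coordinate, and because $Y_\al$ is a rotation$\times$dilation, the $v$--component of $y_1(s,z)=Y_\al(s)\int_0^s Y_\al(r)^{-1}F_1(r,x(r,z,0))\,dr$ can be shown to vanish there; I would verify this explicitly when evaluating the integral. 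The smoothness hypothesis $(h1)$ is immediate since all the vector fields are polynomial (hence $\CC^\infty$) and the boundary $\{v=0\}$ is a hyperplane.

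The core computation is the averaged function $f_1(\al)=\pi\,y_1(2\pi,z_\al)$. Writing $\al=(u^*,v^*)$ and parametrizing the unperturbed orbit, I would split the integral $\int_0^{2\pi}Y_\al(r)^{-1}F_1(r,x(r,z_\al,0))\,dr$ over the two zones $\{v>0\}$ and $\{v<0\}$ according to the sign of the $v$--coordinate $u^*\sin r+v^*\cos r$ along the orbit; by a rotation of the time variable one may assume $v^*=0$, $u^*>0$, so the two zones are $r\in(0,\pi)$ and $r\in(\pi,2\pi)$. The integrand is the product of a rotation matrix with an affine function of $(\cos r,\sin r)$, so each entry is a trigonometric polynomial and the integrals are elementary. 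Collecting the first two components one obtains a linear map of $\al$ whose coefficients are explicit combinations of the $a_i^\pm,b_i^\pm,c_i^\pm,d_i^\pm$; I expect that, after simplification, $f_1(\al)=0$ forces $\|\al\|$ to equal the stated value $\dfrac{4(a_2^- - a_2^+)}{\pi(b_1^-+b_1^++c_2^-+c_2^+)}$ up to the choice of angular position, but that the Jacobian condition $(h4)$ is met at a suitable representative point (using that $f_1$ is equivariant under the rotation flow, its zero set is a circle and one checks nondegeneracy transverse to it and, by the equivariance, along it as well). The main obstacle is precisely this bookkeeping: keeping track of the piecewise contributions, performing the rotation reduction correctly, and verifying $(H3)$ and the nonvanishing of the relevant determinant $(h4)$; the hypothesis $(a_2^--a_2^+)(b_1^-+b_1^++c_2^-+c_2^+)>0$ will enter exactly to make the radius positive and the zero of $f_1$ a simple zero transverse to the orbit circle. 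Once $f_1$ and its zero are in hand with $(h4)$ verified, Theorem~\ref{MRt2} yields the $2\pi$--periodic solution $\f(t,\e)$ with $\f(0,\e)\to z_a=(u^*,v^*,0)$ as $\e\to0$, which is the assertion of the proposition (with $w(0,\e)\to0$ being the third component of this convergence).
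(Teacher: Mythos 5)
Your plan of applying Theorem~\ref{MRt2} directly to system~\eqref{lep} in the original Cartesian coordinates $(u,v,w)\in\R^3$, with $k=2$ and $\CZ=\{(u^*,v^*,0)\}$, does not work, and the failure is precisely what forces the change of variables that the paper's proof makes at the outset. The paper first passes to cylindrical coordinates $(u,v,w)=(r\cos\T,r\sin\T,z)$ and takes $\T$ as the \emph{new time}, which turns \eqref{lep} into a $2\pi$--periodic \emph{nonautonomous} system in $(r,z)\in\R^2$ with $d=2$, $k=1$, and with the discontinuity occurring at fixed instants $\T\in\{0,\pi,2\pi\}$. That single step accomplishes three things that your set-up cannot: it eliminates the time-translation degeneracy of the autonomous system, it makes $\Sigma_0=D$ so that $\p\Sigma_0=\emptyset$, and it collapses the manifold of unperturbed periodic orbits to a one--dimensional family parametrized by the radius, on which $f_1$ becomes a scalar affine function of $\al=r$ with the simple zero $a=\frac{4(a_2^--a_2^+)}{\pi(b_1^++b_1^-+c_2^++c_2^+)}$.

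Two of your claims break down concretely. First, with $\Sigma=\{v=0\}$ kept as a spatial hypersurface, $\Sigma_0=\{v=0\}\cap D$ has empty interior in $D$, so $\p\Sigma_0=\Sigma_0$ itself; your annulus $\CZ\subset\{w=0\}$ cuts $\{v=0\}$, so $\CZ\cap\p\Sigma_0\neq\emptyset$ and hypothesis $(H)$ fails outright (your assertion that ``$\CZ$ avoids its boundary in $D$'' is incorrect). Second, and more fundamentally, \eqref{lep} is autonomous, so its limit cycle has period $2\pi+\CO(\e)$, generically $\neq 2\pi$; hunting for solutions of period \emph{exactly} $2\pi$ via the time-$2\pi$ Poincar\'{e} map is the wrong problem, and this shows up exactly in $(h4)$: if a $2\pi$--periodic solution existed, time-translation along it would make the zero set of the bifurcation function at least one--dimensional, forcing $\det f_1'(a)=0$. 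You actually notice this yourself when you say the zero set of $f_1$ ``is a circle'' and hope equivariance rescues nondegeneracy -- it cannot; a nondiscrete zero set is incompatible with $(h4)$. The polar/angle-as-time reparametrization is therefore not a computational convenience but the essential mechanism: it converts the search for a limit cycle (period unknown) into a search for a genuinely isolated $2\pi$--periodic solution of a forced system, for which the averaged function $f_1$ can and does have a simple zero. Your computations of the unperturbed flow, of $Y_\al(t)=e^{At}$, of the block structure in $(H2)$, and your outline for $(H3)$ and for evaluating the piecewise integrals are all on the right track and transfer easily once the coordinate change is made.
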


Proposition \ref{p1} is proved in Section \ref{PP}.

\section{Proof of Theorem \ref{MRt1}}\label{s2}

Before proving our main result we state some
preliminary lemmas. 


Given a function $\xi:[0,1]\rightarrow\R^d$ we say that $\xi(\e)=\CO(\e^{\ell})$ for some positive integer $\ell$
if there exists constants $\e_1>0$ and $k>0$ such that $||\xi(\e)
||\leq k|\e^{\ell}|$ for $0\leq\e\leq\e_1$, and that $\xi(\e)=o(\e^{\ell})$ for some positive integer $\ell$ if
\[
\lim_{\e\to 0}\dfrac{||\xi(\e)||}{\e^{\ell}}=0.
\]
Here $||\cdot||$ denotes the usual Euclidean norm of $\R^d$. The symbols $\CO$ and $o$ are called the {\it Landau's symbols} (see
for instance \cite{SVM}).

\begin{lemma}\label{l1}
Under the hypotheses $(H)$, $(H1)$, and $(H3)$ of Theorem \ref{MRt1} 
there exist an open and bounded subset $C$ of $U\backslash\p\Sigma_0$, a compact subset $Z\subset C$ with $\CZ\subset Z^{\circ}$, and a small parameter $\e_0>0$ such that $t_{(z,\e)}>T$ for every $z\in C$ and $\e\in[0,\e_0]$. Moreover $x(t,z,\e)=x(t,z,0)+\e y_1(t,z)+o(\e)$ for every $z \in Z$, $\e\in[0,\e_0]$, and $t\in[0,T]$. Here $Z^{\circ}$ denotes the interior of the set $Z$ with respect to the topology of $D$, and the function $y_1$ is given in \eqref{y1}.
\end{lemma}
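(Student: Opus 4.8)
\textbf{Proof proposal for Lemma \ref{l1}.}

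The plan is to establish the result in two stages: first a uniform escape-time estimate showing that trajectories starting near $\CZ$ survive until time $T$ for all small $\e$, and then the first-order expansion of the flow in $\e$ obtained by a Gronwall-type argument applied piecewise across the finitely many crossings of $\Sigma^c$. Since $\CZ$ is compact and $\CZ\cap\p\Sigma_0=\emptyset$ (hypothesis $(H)$), and since by $(H)$ every solution $x_\al(t)=x(t,z_\al,0)$ is $T$--periodic and meets $\Sigma$ only at $\Sigma^c$, I would first fix, for each $\al\in\ov V$, a tubular neighborhood of the compact curve $\{(t,x_\al(t)):t\in[0,T]\}$ and use compactness of $\ov V$ together with continuous dependence on initial conditions to produce a single open bounded set $C\subset U\setminus\p\Sigma_0$ and a compact $Z$ with $\CZ\subset Z^\circ\subset Z\subset C$. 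The key point here is that each $x_\al$ crosses $\Sigma$ transversally at the $\kappa_\al$ points of $\Sigma^c$ it hits; by the implicit function theorem applied to the defining function $h$ of the local hypersurface $\CS_p$ at each crossing, the crossing times depend continuously (indeed $\CC^1$) on the initial condition, and the transversality is an open condition. Shrinking $C$ and choosing $\e_0>0$ small enough, every solution $x(\cdot,z,\e)$ with $z\in C$ still crosses $\Sigma$ exactly $\kappa$ times (the same count as the nearby $x_\al$), each crossing in $\Sigma^c$, so no sliding or finite-time blow-up can occur before time $T$; hence $t_{(z,\e)}>T$.

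For the expansion, I would work on a single crossing-free arc at a time. On each such arc the vector field coincides with one of the smooth pieces $F_0^n+\e F_1^n+\e^2 R^n$, so the standard smooth dependence theory gives $x(t,z,\e)=x(t,z,0)+\e\,w(t,z)+o(\e)$ uniformly for $(t,z)$ in the arc, where $w$ solves the variational equation $w' = D_xF_0^n(t,x(t,z,0))w + F_1^n(t,x(t,z,0))$, which is exactly \eqref{lin} with the inhomogeneous term. The delicate step is the matching of the linearization across a crossing point of $\Sigma^c$: the crossing time $\tau(z,\e)$ of the perturbed trajectory differs from the unperturbed crossing time $\tau(z,0)$ by an $\CO(\e)$ amount, and one must check that the first-order variation of the solution is continuous across the switching manifold even though the vector field jumps. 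This is where hypothesis $(H3)$ enters: the condition $(0,y_1(s,z))\in T_{(s,x(s,z,0))}\Sigma$ whenever $(s,x(s,z,0))\in\Sigma^c$ says precisely that the first-order displacement $y_1$ is tangent to $\Sigma$ at the crossing, so the corrections to the crossing time coming from the two sides cancel in first order and the ``saltation'' (jump) terms that would ordinarily appear in the variational equation across a discontinuity vanish. Consequently $t\mapsto Y(t,z)$ is continuous (as already noted before the lemma), $y_1(t,z)$ as defined in \eqref{y1} is the correct first-order term, and the expansion $x(t,z,\e)=x(t,z,0)+\e y_1(t,z)+o(\e)$ holds uniformly for $z\in Z$, $t\in[0,T]$; the uniformity in $z$ over the compact set $Z$ follows from compactness plus the local Lipschitz hypotheses in $(H1)$ via Gronwall's inequality on each arc.

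I expect the matching across the crossing points to be the main obstacle: one has to carefully track how the $\CO(\e)$ shift in the crossing time interacts with the discontinuity of the vector field, show that the would-be saltation contribution is killed by $(H3)$, and verify that the resulting $y_1$ is continuous and piecewise $\CC^m$ rather than merely bounded. The escape-time part and the Gronwall estimate on each smooth arc are routine; the genuinely new ingredient, compared with the classical smooth averaging setup, is the control of the variational flow through $\Sigma^c$ under the tangency condition $(H3)$.
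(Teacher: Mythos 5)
Your sketch is correct and follows essentially the same route as the paper: the escape-time and crossing-count estimates come from compactness of $\CZ$, continuous dependence, and openness of the transversal-crossing condition; the expansion $x(t,z,\e)=x(t,z,0)+\e\,y_1(t,z)+o(\e)$ is built arc by arc via Gronwall; and the crucial matching across $\Sigma^c$ is exactly where $(H3)$ is used. The one point worth sharpening in your wording: the paper does not merely observe that the crossing times $t^i(z,\e)$ shift by $\CO(\e)$ — it proves (Claim 1) that $(\p t^i/\p\e)(z,0)=0$ identically, by combining the implicit-function relation $h_i(t^i(z,\e),x^i(t^i(z,\e),z,\e))=0$ with the tangency $(0,y_1)\in T_{p^i_z}\Sigma$ from $(H3)$ and the nondegeneracy $Yh_i(p^i_z,0)\neq0$ coming from the crossing condition. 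It is precisely this vanishing of the first-order time shift that makes $\p x^{\ell}/\p\e$ continuous across each crossing (no saltation), which is what you describe informally as the two-sided corrections canceling.
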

\begin{proof}

We note that $\CZ$ is a compact subset of $D$ and $\p\Sigma_0$ is a closed subset of $D$, such that, from the hypothesis $(H)$, $\CZ\cap\p\Sigma_0=\emptyset$. So there exists an open subset $A$ of $D$ such that $\CZ\subset A$ and $\ov A\cap \p\Sigma_0=\emptyset$.

\smallskip

Also from hypothesis $(H)$ we have that for $\al\in \ov V$ the continuous function $x_{\al}(t)$ reaches the set $\Sigma$ only at points of $\Sigma^c$. Since this function is $T$--periodic we can find a finite sequence $(t^i_{\al})$ for $i=0,1,\ldots,\kappa_{\al}$ with $t_{\al}^0=0$ and $t_{\al}^{\kappa_{\al}}=T$ such that 
\[
x_{\al}(t)=
\begin{cases}
\begin{array}{CCRL}
x^1_{\al}(t) &\textrm{if}&0=&t^0_{\al}\leq t\leq t^1_{\al},\\
x^2_{\al}(t)  &\textrm{if}&&t^1_{\al}\leq t\leq t_{\al}^2,\\
\vdots\\
x^i_{\al}(t)  &\textrm{if}&&t^{i-1}_{\al}\leq t\leq t^i_{\al},\\
\vdots\\
x^{\kappa_{\al}}_{\al}(t) &\textrm{if}&&t^{\kappa_{\al}-1}_{\al}\leq t\leq t_{\al}^{\kappa_{\al}}=T,\\
\end{array}
\end{cases}
\]
where each curve $t\mapsto x_{\al}^i(t)=x^i(t,z_{\al},0)$ reaches the set $\Sigma^c$ only at $t=t^{i-1}_{\al}$ and $t=t^{i}_{\al}$ for $i=2,3,\ldots,\kappa_{\al}-1$, the curve $x_{\al}^1$ reaches the set $\Sigma^c$ only at $t=0$ and $t=t^{1}_{\al}$ if $(0,z_{\al})\in\Sigma$, and only at $t=t^{1}_{\al}$ if $(0,z_{\al})\notin\Sigma$, the curve $x_{\al}^{\kappa_{\al}}$ reaches the set $\Sigma^c$ only at $t=t^{\kappa_{\al}-1}_{\al}$ and $t=T$ if $(T,x(T,z_{\al},0))\in\Sigma$, and only at $t=t^{\kappa_{\al}-1}_{\al}$ if $(T,x(T,z_{\al},0))\notin\Sigma$. From the definition of the crossing region $\Sigma^c$ these intersections are transversely.

\smallskip

Since $x_{\al}^i$ for $i=1,2,\ldots,\kappa_{\al}$ are solutions of Lipschitz differential equations we can use the results of continuous dependence of the solutions on initial conditions and parameters to ensure the existence of a small parameter $\e_{\al}$ and a small neighborhood $C_{\al}\subset A\cap U$ of $z_{\al}$ such that $\widetilde{C_{\al}}^{\e}\cap\Sigma\subset\Sigma^c$ for every $\e\in[0,\e_{\al}]$. From the compactness of $\CZ$ we can choose $\e_1$ as a minimum element of $\e_{\al}\in\ov V$. Now taking $C=\cup_{\al\in\ov V} C^{\al}$ it follows that $\widetilde{C}^{\e}\cap\Sigma\subset\Sigma^c$ for every $\e\in[0,\e_1]$. Moreover, we can take $\e_1>0$ and $C$ smaller in order that the function $t\mapsto x(t,z,\e)$ is defined for all $(t,z,\e)\in \s^1\times \ov C\times[0,\e_1]$. This is again a simple consequence of the Theorem of continuous dependence on initial conditions and parameters. 

\smallskip


\smallskip

Thus for $z\in \ov C$ and $\e\in[0,\e_1]$ the function
$t\mapsto x(t,z,\e)$ is continuous and piecewise
$\C^1$. So we can find a finite sequence $(t^i(z,\e))$ for $i=0,1,\ldots\kappa_z$ with $t^1(z,\e)=0$ and $t^{\kappa_{z}}(z,\e)=T$ such that
\begin{equation}\label{xx}
x(t,z,\e)=
\begin{cases}
\begin{array}{CCRL}
x^1(t,z,\e) &\textrm{if}&0=&t^0(z,\e)\leq t\leq t^1(z,\e),\\
x^2(t,z,\e) &\textrm{if}&&t^1(z,\e)\leq t\leq t^2(z,\e),\\
\vdots\\
x^i(t,z,\e) &\textrm{if}&&t^{i-1}(z,\e)\leq t\leq t^i(z,\e),\\
\vdots\\
x^{\kappa_z}(t,z,\e) &\textrm{if}&&t^{\kappa_z-1}(z,\e)\leq t\leq t^{\kappa_z}(z,\e)=T,\\
\end{array}
\end{cases}
\end{equation} 
for which we have the following recurrence
\begin{equation}\label{rec}
\begin{array}{CCC}
x^1(0,z,\e)=z &\textrm{and}& x^{i}(t^{i-1}(z,\e),z,\e)=x^{i-1}(t^{i-1}(z,\e),z,\e),
\end{array}
\end{equation}
for $i=2,3,\ldots,\kappa_z$. The crossing region $\Sigma^c$ is an open subset of $\Sigma$, so for each $z\in \ov C$ we can find $0<\e_z\leq \e_1$ sufficiently small such that the number $\kappa_z$ of intersections between the curve $t\mapsto x(t,z,\e)$ with the set $\Sigma^c$ for $0\leq t\leq T$ and for $\e\in[0,\e_z]$ does not depend of $\e$. Since $\ov C$ is compact we can find $\e_2$ a minimum element of the $\e_z$'s for $z\in\ov C$ such that the above statement holds for every $z\in\ov C$ and $\e\in[0,\e_2]$.

\smallskip

Here again for every $z\in\ov C$ and $\e\in[0,\e_2]$ each curve $t\mapsto x^i(t,z,\e)$ reaches the set $\Sigma^c$ only at $t=t^{i-1}(z,\e)$ and $t=t^{i}(z,\e)$ for $i=2,3,\ldots,\kappa_{z}-1$, the curve $x^1(t,z,\e)$ reaches the set $\Sigma^c$ only at $t=0$ and $t=t^{1}(z,\e)$ if $(0,z)\in\Sigma$, and only at $t=t^{1}(z,\e)$ if $(0,z)\notin\Sigma$, the curve $x^{\kappa_{z}}(t,z,\e)$ reaches the set $\Sigma^c$ only at $t=t^{\kappa_{z}-1}(z,\e)$ and $t=T$ if $(T,x(T,z,0))\in\Sigma$, and only at $t=t^{\kappa_{z}-1}(z,\e)$ if $(T,x(T,z,0))\notin\Sigma$

\smallskip

The functions $t\mapsto x^i(t,z,\e)$ for $i=1,2,\ldots,\kappa_z$ are $\C^1$ and satisfy the DPDS \eqref{MRs1}, so there exists a
subsequence $(n_i)$ for $i=1,\ldots,\kappa_z$ with $n_i\in\{1,2,\ldots,N\}$ such that
\begin{equation}\label{xi}
\dfrac{\p}{\p t}x^i(t,z,\e)=F_0^{n_i}(t,x^i(t,z,\e))+\e F_1^{n_i}(t,x^i(t,z,\e))+\e^2R^{n_i}(t,x^i(t,z,\e),\e).
\end{equation}
Therefore the function $x^i(t,z,\e)$ is the solution of the {\it Cauchy
Problem} defined by the differential system $\eqref{xi}$ together
with the corresponding initial condition given in \eqref{rec}. Moreover $x^i(t,z_{\al},0)=x^i_{\al}(t)$ and $t^i(z_{\al},0)=t_{\al}^i$ for $i=1,2,\ldots,\kappa_z$.

\smallskip

From the continuity of the function $x(t,z,\e)$ we can choose a compact subset $K$ of $D$ such that $x(t,z,\e)\in K$ for all
$(t,z,\e)\in\s^1\times \ov{C}\times[0,\e_2]$. From the continuity of the functions $F_i^n$ and $R^n$ for $i=0,1$ and $n=1,2,\ldots,N$ we have that these functions are bounded on the compact set $\s^1\times K\times[0,\e_2]$, so let $M$ be an upper bound for all these functions. Let $L$ be being the maximum Lipschitz constant of the functions $F_i^n$, $DF_0^n$, and $R^n$ for $i=0,1$ and $n=1,2,\ldots,N$ on the compact set $\s^1\times K\times[0,\e_2]$.

\smallskip

We compute
\[
\left|\left|\int_0^t
R(s,x(s,z,\e),\e)ds\right|\right|\leq\int_0^T||R(s,x(s,z,\e),\e)
||ds=TM,
\]
which implies that $\dint_0^t R(s,x(s,z,\e),\e)ds=\CO(1)$ in the parameter $\e$.

\smallskip

For $z\in\ov C$ and $t\in(0,T)$ we can find
$\ov{\kappa}\in\{1,2,\ldots,\kappa_z-1\}$ such that
$t\in[t^{\ov{\kappa}-1}(z,\e),t^{\ov{\kappa}}(z,\e))$ and
\[
\begin{array}{RL}
x(t,z,\e)=&x^{\ov{\kappa}}(t,z,\e)\vspace{0.3mm}\\
=&x^{\ov{\kappa}-1}(t^{\ov{\kappa}-1}(z,\e),z,\e)+\int_{
t^{\ov{\kappa}-1}(z,\e)}^t
F_0(s,x(s,z,\e))ds\vspace{0.2cm}\\
&+\e\int_{t^{\ov{\kappa}-1}(z,\e)}^t
F_1(s,x(s,z,\e))ds+\CO(\e^{2}).
\end{array}
\]
Since
\[
\begin{array}{RL}
x^{i}(t^{i}(z,\e),z,\e)=&x^{i-1}(t^{i-1}(z,\e),z,\e)+\int_{t^{i-1}(z,\e)}^
{t^{i}(z,\e)} F_0(t,x(t,z,\e))dt\vspace{0.2cm}\\
&+\e \int_{t^{i-1}(z,\e)}^
{t^{i}(z,\e)} F_1(t,x(t,z,\e))dt+\CO(\e^2),
\end{array}
\]
for $i=1,2,\ldots,\kappa_z$, we obtain, proceeding by induction on $i$, that
\begin{equation}\label{x}
x(t,z,\e)=z+\int_0^tF_0(s,x(s,z,\e))ds+\e\int_0^tF_1(s,x(s,z,\e))ds+\CO(\e^2).
\end{equation}

From here the proof of the lemma follows by proving several claims.

\begin{claim}\label{c1}
There exists a small parameter $\e_0>0$ such that for any $z\in Z$ and for $i=0,1,2,\ldots,\kappa_z$ the function $t^i(z,\e)$ is of class $\C^1$  for every $z$ in a neighborhood $U_z\subset C$ of $z$ and for $\e\in[0,\e_0]$, and $(\p\, t^i/\p\e)(z,0)=0$. Moreover for $t^{i-i}(z,0)\leq t\leq t^i(z,0)$ we have that $y_1(t,z)=(\p\,x^i/\p\e)(t,z,0)$ for $i=1,2,\ldots,\kappa_{z}$.
\end{claim}

First of all we note that $t^1(z,\e)=0$ and $t^{\kappa_z}(z,\e)=T$. So the first part of Claim \ref{c1} is clearly true for $i=0$ and $i=\kappa_z$.

\smallskip

We have concluded above that for each $z\in Z$ the curve $t\mapsto x(t,z,0)$ reaches the discontinuity set only at points of $\Sigma^c$. Let $z^i=x^i(t^i(z,0),z,0)$ and $p^i_z=(t^i(z,0),z^i)\in \Sigma^c$, then $p^i_z\in\Sigma^c$ for
every $i=1,2,\ldots,\kappa_{z}$ if $(0,x(T,z,0))\in\Sigma$, and for every $i=1,2,\ldots,\ov{\kappa}_{z}-1$ if $(0,x(T,z,0))\notin\Sigma$. Particularly $p_i$ is a generic
point of $\Sigma$, so there exists a
neighborhood $G_{p^i_z}$ of $p^i_z$ such that
$\CS_{p^i_z}=G_{p^i_z}\cap\Sigma$ is a $\CC^m$ embedded
hypersurface of $\s^1\times D$ with $m\geq1$. It is well known that
$\CS_{p^i_z}$ can be locally described as the inverse image of a
regular value of a $\CC^m$  function. Thus there exists a small neighborhood $\breve G_{p^i_z}$ of $p^i_z$ with $\breve G_{p^i_z}\subset G_{p^i_z}$ and a $\CC^m$
function $h_i:\breve G_{p^i_z}\rightarrow \R $ such that $\breve G_{p^i_z}\cap\CS_{p^i_z} =h_i^{-1}(0)\cap\Sigma$.

\smallskip

For $(t,x)\in \breve  G_{p^i_z}$ system \eqref{MRs1} can be written
as the autonomous system
\[
\begin{pmatrix}\tau'\\x'\end{pmatrix}=
\begin{cases}
X(\tau,x,\e)\quad \textrm{if}\quad h_i(\tau,x)>0,\vspace{0.2cm}\\
Y(\tau,x,\e)\quad \textrm{if}\quad h_i(\tau,x)<0,
\end{cases}
\]
where
\[
\begin{array}{C}
X(\tau,x,\e)=\left(\begin{array}{C}1 \\ F_0^{n_{i+1}}(\tau,x) \e F_1^{n_{i+1}}(\tau,x)+
\e^2R^{n_{i+1}}(\tau,x,\e)\end{array}
\right),\vspace{0.2mm}\\
Y(\tau,x,\e)=\left(\begin{array}{C}1 \\ F_0^{n_i}(\tau,x)+\e F_1^{n_i}(\tau,x)+\e^2R^{n_i}(\tau,x,\e)\end{array}\right).
\end{array}
\]

From the definition of crossing region we also have $X h_i(p^i_z,0) Y h_i(p^i_z,0)>0$, then
\begin{equation}\label{Yh}
\begin{array}{RL}
0\neq Y h_i(p^i_z,0)&=\left\langle \left(\dfrac{\partial h_i}{\partial t}(p^i_z),
\dfrac{\partial h_i}{\partial x}(p^i_z)\right),\left(1,F_0^{n_{i}}(p^i_z)\right)\right\rangle\vspace{0.2cm}\\
&=\dfrac{\partial h_i}{\partial t}(p^i_z)+\dfrac{\partial h_i}{\partial x}(p^i_z)F_0^{n_{i}}(p^i_z).
\end{array}
\end{equation}

Now defining $H_i(t,\zeta,\e)= h_i(t,x^i(t,\zeta,\e))$ we have that $H_i(t^i(z,0),z,0)=0$. Since
\[
\begin{array}{RL}
\dfrac{\partial H_i}{\partial t}(t^i(z,0),z,0)=&\left.\dfrac{\partial}
{\partial t}h_i(t,x^i(t,\zeta,\e))\right|_{(t,\zeta,\e)=(t^i(z,0),z,0)}\vspace{0.3cm}\\
=& \dfrac{\partial h_i}{\partial t}(t^i(z,0),x^i(t^i(z,0),z,0))\vspace{0.2cm}\\
&+\dfrac{\partial h_i}
{\partial x}(t^i(z,0),x^i(t^i(z,0),z,0))\dfrac{\partial x^i}{\partial t}(t^i(z,0),z,0)\vspace{0.3cm}\\
=& \dfrac{\partial h_i}{\partial t}(p^i_z)+\dfrac{\partial h_i}
{\partial x}(p^i_z)\dfrac{\partial x^i}{\partial t}(t^i(z,0),z,0)\vspace{0.3cm}\\
=&\dfrac{\partial h_i}{\partial t}(p^i_z)+\dfrac{\partial h_i}
{\partial x}(p^i_z)F_0^{n_{i}}(p^i_z)\vspace{0.3cm}\\
=&Y h_i(p^i_z,0)\neq0,
\end{array}
\]
from the Implicit Function Theorem we conclude that there exist a small neighborhood $U_z\subset C$ of $z$ and a small parameter $\tilde{\e}_z>0$ such that $t^i(\zeta,\e)$ is the unique $\CC^m$ function with $H(t^i(\zeta,\e),\e)=0$ for every $\zeta\in U_z$ and $\e\in[0,\tilde{\e}_z]$. So
\begin{equation}\label{ti}
t^i(\zeta,\e)=t^i(\zeta,0)+\e \dfrac{\p\,t^i}{\p\e}(\zeta,0)+o(\e)
\end{equation}
for every
$i=1,2,\ldots,\ov{\kappa}_{z}-1$. Since $Z$ is compact we can take $\e_0$ as a minimum element of $\tilde{\e}_z$'s for $z\in Z$.

\smallskip

Now we shall use finite induction to conclude the proof of Claim \ref{c1}. We note that $h_i(t^i(z,\e), x^i(t^i(z,\e),z,\e))=0$ for $\e\in[0,\e_0]$, so
\begin{equation}\label{ind1}
\begin{array}{RL}
0=&\dfrac{\partial}{\partial \e}h(t^i(z,\e), x^i(t^i(z,\e),z,\e))\Big|_{\e=0}\vspace{0.3cm}\\

=&\dfrac{\partial h}{\partial\, t}(p^i_z)\dfrac{\p\,t^i}{\p\e}(z,0)+\dfrac{\partial h}{\partial z}
(p^i_z)\left(\dfrac{\partial x^i}{\partial t}(t^i(z,0),z,0)\dfrac{\p\, t^i}{\p\e}(z,0)\right.\vspace{0.2cm}\\
&\left.+\dfrac{\partial x^i}{\partial \e}(t^i(z,0),z,0)\right)\vspace{0.3cm}\\

=&\dfrac{\partial h}{\partial\, t}(p^i_z)\dfrac{\p\,t^i}{\p\e}(z,0)+\dfrac{\partial h}{\partial z}
(p^i_z)\left(F_0^{n_i}(p^i_z)\dfrac{\p\, t^i}{\p\e}(z,0)+\dfrac{\partial x^i}{\partial \e}(t^i(z,0),z,0)\right)\vspace{0.3cm}\\

=&\left\langle \nabla h(p^i_z),\Big(\dfrac{\p\,t^i}{\p\e}(z,0), F_0^{n_i}(p^i_z)\dfrac{\p\, t^i}{\p\e}(z,0)+\dfrac{\partial x^i}{\partial \e}(t^i(z,0),z,0)\Big)\right\rangle,
\end{array}
\end{equation}
for $i=1,2,\ldots,\kappa_z$.

\smallskip

Taking $i=1$, from \eqref{xi} we obtain that
\begin{equation}\label{ind2}
\dfrac{d}{dt}\left(\dfrac{\p x^1}{\p \e}(t,z,0)\right)=DF_0^{n_1}(t,x^1(t,z,0))\left(\dfrac{\p x^1}{\p \e}(t,z,0)\right)+F_1^{n_1}(t,x^1(t,z,0)).
\end{equation}
So for $0\leq t\leq t^1(z,0)$ the differential system \eqref{ind2} becomes
\begin{equation}\label{ind3}
\dfrac{d}{dt}\left(\dfrac{\p x^1}{\p\e}(t,z,0)\right)=DF_0(t,x(t,z,0))\left(\dfrac{\p x^1}{\p\e}(t,z,0)\right)+F_1(t,x(t,z,0)).
\end{equation}
Since $\dfrac{\p x^1}{\p \e}(0,z,0)=0$ the solution of the linear differential system \eqref{ind3} is
\begin{equation}\label{lab1}
\dfrac{\p x^1}{\p\e}(t,z,0)=Y(t,z)\int_0^tY(s,z)^{-1} F_1(x(s,z,0))ds=y_1(t,z),
\end{equation}
for $0\leq t\leq t^1(z,0)$. Now  from hypothesis $(H3)$ and from equality \eqref{ind1}, for $i=1$, we have that 
\begin{equation}\label{proc1}
\Big(\la \dfrac{\p\,t^1}{\p\e}(z,0), \la F_0^{n_1}(p^1_z)\dfrac{\p\, t^1}{\p\e}(z,0)+y_1(t^1(z,0),z)\Big)\in T_{p^1_z}\Sigma
\end{equation}
for every $\la\in[0,1]$. Thus
\begin{equation}\label{proc2}
\begin{array}{RL}
0=&\left\langle \nabla h(p^1_z),\Big(\la \dfrac{\p\,t^1}{\p\e}(z,0), \la F_0^{n_1}(p^1_z)\dfrac{\p\, t^1}{\p\e}(z,0)+y_1(t^1(z,0),z)\Big)\right\rangle\vspace{0.2cm}\\
=&\la\left(\dfrac{\partial h}{\partial\, t}(p^1_z)\dfrac{\p\,t^1}{\p\e}(z,0)+\dfrac{\partial h}{\partial z}
(p^1_z)F_0^{n_1}(p^1_z)\dfrac{\p\, t^1}{\p\e}(z,0)\right)+\dfrac{\partial h}{\partial z}
(p^1_z)y_1(t^1(z,0),z)\vspace{0.2cm}\\
=&\la Yh_1(p^1_z,0)\dfrac{\p\,t^1}{\p\e}(z,0)+\dfrac{\partial h}{\partial z}
(p^1_z)y_1(t^1(z,0),z),
\end{array}
\end{equation}
for every $\la\in[0,1]$. Computing the derivative with respect to $\la$ in \eqref{proc2} it follows that $Yh_1(p^1_z,0)\dfrac{\p\,t^1}{\p\e}(z,0)=0$ . So from \eqref{Yh} we obtain that 
\begin{equation}\label{lab2}
\dfrac{\p\,t^1}{\p\e}(z,0)=0.
\end{equation}
Hence from \eqref{lab1} and \eqref{lab2} the claim is proved for $i=1$.

\smallskip

Given a positive integer $\ell>1$, we assume by induction hypothesis that Claim \ref{c1} is true for $i=\ell-1$. Taking $i=\ell$ from \eqref{xi} we have that
\begin{equation}\label{ind4}
\dfrac{d}{dt}\left(\dfrac{\p x^{\ell}}{\p \e}(t,z,0)\right)=DF_0^{n_{\ell}}(t,x^{\ell}(t,z,0))\left(\dfrac{\p x^{\ell}}{\p \e}(t,z,0)\right)+F_1^{n_{\ell}}(t,x^{\ell}(t,z,0)).
\end{equation}
So for $t^{\ell-1}(z,0)\leq t\leq t^{\ell}(z,0)$ the differential system \eqref{ind4} becomes
\begin{equation}\label{ind5}
\dfrac{d}{dt}\left(\dfrac{\p x^{\ell}}{\p\e}(t,z,0)\right)=DF_0(t,x(t,z,0))\left(\dfrac{\p x^{\ell}}{\p\e}(t,z,0)\right)+F_1(t,x(t,z,0)).
\end{equation}
From \eqref{rec} we have that $x^{\ell}(t^{\ell-1}(z,\e),z,\e)=x^{\ell-1}(t^{\ell-1}(z,\e),z,\e)$ for every $\e\in[0,\e_0]$. Computing its derivative with respect to $\e$ at $\e=0$ we obtain that
\begin{equation}\label{ine6}
\begin{array}{L}
\dfrac{\p x^{\ell}}{\p\,t}(t^{\ell-1}(z,0),z,0)\dfrac{\p\, t^{\ell-1}}{\p \e}(z,0)+ \dfrac{\p x^{\ell}}{\p \e}(t^{\ell-1}(z,0),z,0)=\vspace{0.2cm}\\
\dfrac{\p x^{\ell-1}}{\p t}(t^{\ell-1}(z,0),z,0)\dfrac{\p \, t^{\ell-1}}{\p \e}(z,0)+ \dfrac{\p x^{\ell-1}}{\p \e}(t^{\ell-1}(z,0),z,0).
\end{array}
\end{equation}
So from induction hypothesis it follows that
\begin{equation}\label{ine7}
\dfrac{\p x^{\ell}}{\p \e}(t^{\ell-1}(z,0),z,0)=\dfrac{\p x^{\ell-1}}{\p \e}(t^{\ell-1}(z,0),z,0)=y_1(t^{\ell-1},z).
\end{equation}
We note that \eqref{ine7} is the initial condition for system \eqref{ind5}. Thus for $t^{\ell-1}(z,0)\leq t\leq t^{\ell}(z,0)$ regarding to the linear differential equation \eqref{ind5} we get that
\begin{equation}\label{ine8}
\dfrac{\p x^{\ell}}{\p\e}(t,z,0)=\widetilde Y(t,z)y_1(t^{\ell-1}(z,0),z)+\widetilde Y(t,z)\int_0^t\widetilde Y(s,z)^{-1} F_1(x(s,z,0))ds,
\end{equation}
where $\widetilde Y(t,z)$ is the fundamental matrix of the linear differential system \eqref{lin} such that $\widetilde Y(t^{\ell-1}(z,0),z)$
is the identity matrix. Clearly  $\widetilde Y(t,z)=Y(t,z)Y(t^{\ell-1}$ $(z,0),z)^{-1}$. So substituting \eqref{y1} in \eqref{ine8} we get
\begin{equation}\label{ine9}
\begin{array}{RL}
\dfrac{\p x^{\ell}}{\p\e}(t,z,0)=&Y(t,z)\int_0^{t^{\ell-1}(z,0)}Y(s,z)^{-1} F_1(x(s,z,0))ds\vspace{0.2cm}\\
&+Y(t,z)\int_{t^{\ell-1}(z,0)}^t Y(s,z)^{-1} F_1(x(s,z,0))ds\vspace{0.3cm}\\
=&Y(t,z)\int_{t^{\ell-1}(z,0)}^t Y(s,z)^{-1} F_1(x(s,z,0))ds=y_1(t,z),
\end{array}
\end{equation}
for $t^{\ell-1}(z,0)\leq t\leq t^{\ell}(z,0)$. Now repeating the procedure of \eqref{proc1} and \eqref{proc2} for $i=\ell$ we conclude that $\dfrac{\p\,t^{\ell}}{\p\e}(z,0)=0$. So we have proved Claim \ref{c1}.

\bigskip

\begin{claim}\label{c2}
The equality $x(t,z,\e)=x(t,z,0)+\CO(\e)$ holds for every $z\in Z$ and $\e\in[0,\e_0]$.
\end{claim}

If $t\in[t^{\ov{\kappa}-1}(z,\e),t^{\ov{\kappa}}(z,\e))$ then we compute
\[
\begin{array}{RL}
\int_0^t F_0(s,x(s,z,\e))ds =& \sum_{i=1}^{\ov{\kappa}-1}\left(
\int_{t^{i-1}(z,\e)}^{t^{i}(z,\e)}F_0^{n_i}(s,x(s,z,\e))ds\right)\vspace{0.2cm}\\
&+\int_{t^{\ov{\kappa}-1}(z,\e)}^{t}F_0^{n_{\ov{\kappa}}}(s,x(s,z,\e))ds\vspace{0.3cm}\\

=& \sum_{i=1}^{\ov{\kappa}-1}\left(
\int_{t^{i-1}(z,0)}^{t^{i}(z,0)}F_0^{n_i}(s,x(s,z,\e))ds\right)\vspace{0.2cm}\\
&+\int_{t^{\ov{\kappa}-1}(z,0)}^{t}F_0^{n_{\ov{\kappa}}}(s,x(s,z,\e))ds+E_0(\e),\\
\end{array}
\]
where
\[
\begin{array}{RL}
E_0(\e)=&\sum_{i=1}^{\ov{\kappa}-1}\left(\int_{t^{i-1}(z,\e)}^{t^{i-1}(z,0)}
F_0^{n_i}(s,x(s,z,\e))ds-\int_{t^{i}(z,\e)}^{t^{i}(z,0)}F_0^{n_i}
(s,x(s,z,\e))ds \right)\vspace{0.2cm}\\
&+ \int_{t^{\ov{\kappa}-1}(z,\e)}^{t^{\ov{\kappa}-1}(z,0)}F_0^{n_{\ov{\kappa}}}
(s,x(s,z,\e))ds.
\end{array}
\]
It is easy to see that there exists a constant $\ov E$ such that
\begin{equation}\label{E}
||E_0(\e)||\leq \ov E \sum_{i=0}^{\ov{\kappa}-1}|t^i(z,0) - t^i(z,\e)|.
\end{equation}
Indeed the function $F_0^{n_i}(t,x)$ are bounded in the set $\s^1\times K$, so
\[
\begin{array}{RL}
\left|\left|\int_{t^{i}(z,\e)}^{t^{i}(z,0)}F_0^{n_i}
(s,x(s,z,\e))ds\right|\right|&\leq\int_{t^{i}(z,\e)}^{t^{i}(z,0)}\left|\left|F_0^{n_i}
(s,x(s,z,\e))\right|\right|ds\vspace{0.2cm}\\
&\leq L|t^i(z,0) - t^i(z,\e)|,
\end{array}
\]
for $i=0,1,2,\ldots,\ov{\kappa}$.

\smallskip

From Claim \ref{c1} we conclude that $E_0(\e)=o(\e)$, particularly $E_0(\e)=\CO(\e)$. Thus
\begin{equation}\label{FF0}
\begin{array}{RL}
\int_0^t F_0(s,x(s,z,\e))ds=&\sum_{i=1}^{\ov{\kappa}-1}\left(
\int_{t^{i-1}(z,0)}^{t^{i}(z,0)}F_0^{n_i}(s,x(s,z,\e))ds\right)\vspace{0.2cm}\\
&+\int_{t^{\ov{\kappa}-1}(z,0)}^{t}F_0^{n_{\ov{\kappa}}}(s,x(s,z,\e))ds+\CO(\e).
\end{array}
\end{equation}

Using the fact that the functions $F_0^{n_i}$ for $i=1,2,\ldots,\kappa_z$ are locally Lipschitz in the second variable together with \eqref{FF0} we obtain
\[
\begin{array}{RL}
&\left|\left|\int_0^t F_0(s,x(s,z,\e))-F_0(s,x(s,z,0))ds\right|\right|\vspace{0.2cm}\\

\leq&\sum_{i=1}^{\ov{\kappa}-1}
\int_{t^{i-1}(z,0)}^{t^{i}(z,0)}\big|\big|F_0^{n_i}(s,x(s,z,\e))-F_0^{n_i}(s,x(s,z,0))\big|\big|ds\vspace{0.2cm}\\
&+\int_{t^{\ov{\kappa}-1}(z,0)}^{t}\big|\big|F_0^{n_{\ov{\kappa}}}(s,x
(s,z,\e))-F_0^{n_{\ov{\kappa}}}(s,x(s,z,0))\big|\big|ds+\CO(\e)\vspace{0.2cm}\\

\leq& L\sum_{i=1}^{\ov{\kappa}-1}
\int_{t^{i-1}(z,0)}^{t^{i}(z,0)}||x(s,z,\e)-x(s,z,0)||ds\vspace{0.2cm}\\
&+L\int_{t^{\ov{\kappa}-1}(z,0)}^{t}||x(s,z,\e)-x(s,z,0)||ds+\CO(\e)\vspace{0.2cm}\\
=&L\int_{0}^{t}||x(s,z,\e)-x(s,z,0)||ds + \CO(\e).
\end{array}
\]
From \eqref{x} this implies that
\begin{equation}\label{gron}
\begin{array}{RL}
||x(t,z,\e)-x(t,z,0)||\leq&\int_0^t||F_0(s,x(s,z,\e))-F_0(s,x(s,z,0))||ds\\
&+|\e|\int_0^t||F_1(s,x(s,z,\e))||ds+\CO(\e^2)\vspace{0.2cm}\\
\leq&|\e|MT+L\int_{0}^{t}||x(s,z,\e)-x(s,z,0)||ds\vspace{0.2cm}\\
\leq&|\e|M T e^{TL}.\\
\end{array}
\end{equation}
The last inequality is a consequence of Gronwall Lemma (see, for example, Lemma 1.3.1 of \cite{SVM}).

\smallskip

From \eqref{gron} we conclude that $x(t,z,\e)=x(t,z,0)+\CO(\e)$. So we have proved Claim \ref{c2}.

\bigskip

\begin{claim}\label{c3}
The equality $x(t,z,\e)=x(t,z,0)+\e y_1(t,z)+o(\e)$ holds for every $z\in Z$ and $\e\in[0,\e_0]$. 
\end{claim}

In the proof of Lemma 1 of \cite{LNT2} it has been proved that
\begin{equation}\label{F0F1}
\begin{array}{RL}
F_0^{n_i}(t,x^i(t,z,\e))=&F_0^{n_i}(t,x^i(t,z,0))+D_x F^{n_i}_0(t,x^i(t,z,0))\vspace{0.2cm}\\
&\cdot(x^i(t,z,\e)-x^i(t,z,0))+\CO(\e^2), \vspace{0.3cm}\\
F_1^{n_i}(t,x^i(t,z,\e))=&F_1^{n_i}(t,x^i(t,z,0))+\CO(\e),
\end{array}
\end{equation}
for all $t^{i-1}(z,\e)\leq t\leq t^i(z,\e)$ and for every $i=1,2,\ldots,\kappa_z$. In what follows we give a sketch of the proof.

\smallskip

Let $\CL(\mu)=G\big(t,\mu x^i(t,z,\e)+(1-\mu)x^i(t,z,0)\big)$. So
\begin{equation}\label{lab5}
\begin{array}{RL}
F_0^{n_i}(t,x^i(t,z,\e))=&F_0^{n_i}(t,x^i(t,z,0))+\CL_1(1)-\CL_1(0)\vspace{0.3cm}\\

=&F_0^{n_i}(t,x^i(t,z,0))+\int_0^1\CL_1'(\la_{1})d\la_{1}\vspace{0.3cm}\\

=&F_0^{n_i}(t,x^i(t,z,0))+\int_0^1D_x F_0^{n_i}(t,\ell_1(x^i(t,z,\e)))d\la_1\vspace{0.2cm}\\
&\cdot(x^i(t,z,\e)-x^i(t,z,0))\vspace{0.3cm}\\

=&\int_0^1\Big[D_x F_0^{n_i}(t,\ell_1(x^i(t,z,\e)))-D_x F_0^{n_i}(t,x^i(t,z,0))\Big]d\la_1\vspace{0.2cm}\\
&\cdot(x^i(t,z,\e)-x^i(t,z,0))+F_0^{n_i}(t,x^i(t,z,0))\vspace{0.2cm}\\
&+D_x F_0^{n_i}(t,x^i(t,z,0))\cdot(x^i(t,z,\e)-x^i(t,z,0)).
\end{array}
\end{equation}
So observing that the function $D_x F_0^{n_i}(t,x)$ is locally Lipschitz in the variable $x$ and using Claim \ref{c2} in \eqref{lab5} we obtain the equality for $F_0^{n_i}$ of \eqref{F0F1}. The equality for $F_1^{n_i}(t,x)$ of \eqref{F0F1} is obtained directly by using Claim \ref{c2} together with the fact that this function is Lipschitz in the variable $x$.

\smallskip
 
From \eqref{F0F1} we obtain that 
\begin{equation}\label{F0}
\begin{array}{RL}
F_0^{n_i}(t,x^i(t,z,\e))=&F_0^{n_i}(t,x^i(t,z,0))+\e D_x F^{n_i}_0(t,x^i(t,z,0))\vspace{0.2cm}\\
&\cdot\dfrac{\p x^i}{\p\e}(t,z,0)+\CO(\e^2),
\end{array}
\end{equation}
for all $t^{i-1}(z,\e)\leq t\leq t^i(z,\e)$ and for every $i=1,2,\ldots,\kappa_z$. For the moment we cannot use Claim \ref{c1} to ensure that $\dfrac{\p x^i}{\p\e}(t,z,0)=y_1(t,z)$ because it is only true when $t^{i-1}(z,0)\leq t\leq t^i(z,0)$.

\smallskip

Given $z\in C$ we have that, for every $t^{i-1}(z,\e)\leq t\leq t^i(z,\e)$, $x^i(t,z,\e)=x(t,z,\e)$ for $i=1,2,\ldots,\kappa_{\al}$.  Moreover if $t^{i-1}(z,\e)\leq s<t^i(z,\e)$ and $\e\in[0,\e_0]$, then $F_j^{n_i}(s,x^i(s,z,\e))=F_j(s,x(t,z,\e))$ for $j=0,1$ and for every $i=1,2,\ldots,\ov{\kappa}$.

\smallskip

If $t^{\kappa-1}(z,\e)\leq t\leq t^{\kappa}(z,\e)$ from \eqref{F0F1} we compute
\begin{equation}\label{F1}
\begin{array}{L}
\int_0^t F_1(s,x(s,z,\e))ds=\vspace{0.3cm}\\

\left(\sum_{i=1}^{\ov{\kappa}-1}
\int_{t^{i-1}(z,\e)}^{t^{i}(z,\e)}F_1^{n_i}(s,x^i(s,z,\e))ds\right)+
\int_{t^{\ov{\kappa}-1}(z,\e)}^{t}F_1^{n_{\ov{\kappa}}}(s,x^{\ov{\kappa}}
(s,z,\e))ds=\vspace{0.3cm}\\

\left(\sum_{i=1}^{\ov{\kappa}-1}
\int_{t^{i-1}(z,\e)}^{t^{i}(z,\e)}F_1^{n_i}(s,x^i(s,z,0))ds\right)+
\int_{t^{\ov{\kappa}-1}(z,\e)}^{t}F_1^{n_{\ov{\kappa}}}(s,x^{\ov{\kappa}}(s,z,0))ds+\CO(\e)=\vspace{0.3cm}\\

\left(\sum_{i=1}^{\ov{\kappa}-1}\int_{t^{i-1}(z,0)}^{t^{i}(z,0)}
F_1^{n_i}(s,x^i(s,z,0))ds\right)+ \int_{t^{\ov{\kappa}-1}(z,0)}^{t}
F_1^{n_{\ov{\kappa}}}(s,x^{\ov{\kappa}}(s,z,0))ds+E_1(\e)\vspace{0.2cm}\\
+\CO(\e)=\vspace{0.3cm}\\

\left(\sum_{i=1}^{\ov{\kappa}-1}\int_{t^{i-1}(z,0)}^{t^{i}(z,0)}
F_1(s,x(s,z,0))ds\right)+ \int_{t^{\ov{\kappa}-1}(z,0)}^{t}
F_1(s,x(s,z,0))ds+E_1(\e)+\CO(\e)=\vspace{0.3cm}\\

\int_{0}^{t}F_1(s,x(s,z,0))ds+E_1(\e)+\CO(\e),
\end{array}
\end{equation}
where
\[
\begin{array}{RL}
E_1(\e)=&\sum_{i=1}^{\ov{\kappa}-1}\left(\int_{t^{i-1}(z,\e)}^{t^{i-1}(z,0)}
F_1^{n_i}(s,x^i(s,z,0))ds-\int_{t^{i}(z,\e)}^{t^{i}(z,0)}F_1^{n_i}
(s,x^i(s,z,0))ds \right)\vspace{0.2cm}\\
&+ \int_{t^{\ov{\kappa}-1}(z,\e)}^{t^{\ov{\kappa}-1}(z,0)}F_1^{n_{\ov{\kappa}}}
(s,x^{\ov{\kappa}}(s,z,0))ds.
\end{array}
\]
Now, as in the case $E_0(\e)$ of the proof of Claim \ref{c2}, it is easy to see that there exists a constant $\widetilde E$ such that
\begin{equation}\label{E}
||E_1(\e)||\leq \widetilde E \sum_{i=0}^{\ov{\kappa}-1}|t^i(z,0) - t^i(z,\e)|.
\end{equation}
So from Claim \ref{c1} we conclude that $E_1(\e)=o(\e)$ and consenquently $E_1(\e)=\CO(\e)$. Going back to inequality
\eqref{F1} we obtain
\begin{equation}\label{FF1}
\int_0^t F_1(s,x(s,z,\e))ds=\int_0^t F_1(s,x(s,z,0))ds+\CO(\e).
\end{equation}

From Claim \ref{c1}, $\dfrac{\p x^i}{\p\e}(t,z,0)=y_1(t,z)$ for $t^{i-1}(z,0)\leq t\leq t^i(z,0)$, so from \eqref{F0} we compute
\begin{equation}\label{iine1}
\begin{array}{L}
\int_0^t F_0(s,x(s,z,\e))ds =\vspace{0.3cm}\\

\sum_{i=1}^{\ov{\kappa}-1}\left(
\int_{t^{i-1}(z,\e)}^{t^{i}(z,\e)}F_0^{n_i}(s,x^i(s,z,\e))ds\right)+
\int_{t^{\ov{\kappa}-1}(z,\e)}^{t}F_0^{n_{\ov{\kappa}}}(s,x^{\ov{\kappa}}
(s,z,\e))ds=\vspace{0.3cm}\\

\sum_{i=1}^{\ov{\kappa}-1}\left(
\int_{t^{i-1}(z,\e)}^{t^{i}(z,\e)}\big[F_0^{n_i}(s,x^i(s,z,0))+\e D_x
F_0^{n_i}(s,x^i(s,z,0))\dfrac{\p x^i}{\p\e}(s,z,0)\big]ds\right)+\vspace{0.2cm}\\
\int_{t^{\ov{\kappa}-1}(z,\e)}^{t}\big[F_0^{n_{\ov{\kappa}}}(s,x^{\ov{\kappa}}(s,z,0))+\e D_x
F_0^{n_{\ov{\kappa}}}(s,x^{\ov{\kappa}}(s,z,0))\dfrac{\p x^{\ov{\kappa}}}{\p\e}(t,z,0)\big]ds+\CO(\e^2)=\vspace{0.3cm}\\

\sum_{i=1}^{\ov{\kappa}-1}\left(
\int_{t^{i-1}(z,0)}^{t^{i}(z,0)}\big[F_0^{n_i}(s,x^i(s,z,0))+\e D_x
F_0^{n_i}(s,x^i(s,z,0))\dfrac{\p x^i}{\p\e}(t,z,0)\big]ds\right)+\vspace{0.2cm}\\
\int_{t^{\ov{\kappa}-1}(z,0)}^{t}\big[F_0^{n_{\ov{\kappa}}}(s,x^{\ov{\kappa}}(s,z,0))+\e D_x
F_0^{n_{\ov{\kappa}}}(s,x^{\ov{\kappa}}(s,z,0))\dfrac{\p x^{\ov{\kappa}}}{\p\e}(t,z,0)\big]ds+E_2(\e)\vspace{0.2cm}\\
+\CO(\e^2)=\vspace{0.3cm}\\

\sum_{i=1}^{\ov{\kappa}-1}\left(
\int_{t^{i-1}(z,0)}^{t^{i}(z,0)}\big[F_0(s,x(s,z,0))+\e D_x
F_0^{n_i}(s,x(s,z,0))y_1(s,z)\big]ds\right)+\vspace{0.2cm}\\
\int_{t^{\ov{\kappa}-1}(z,0)}^{t}\big[F_0(s,x(s,z,0))+\e D_x
F_0^{n_{\ov{\kappa}}}(s,x(s,z,0))y_1(s,z)\big]ds+E_2(\e)+\CO(\e^2)
\end{array}
\end{equation}
The last equality comes from observing that $F_0^{n_i}(s,x^i(s,z,0))=F_0(s,x(s,z,0))$ for every $s\in[t^{i-1}(z,0),t^{i}(z,0))$ and $i=1,2,\ldots,\ov{\kappa}$. From definition \eqref{nota} the inequality \eqref{iine1} becomes
\begin{equation}\label{ine1}
\begin{array}{RL}
\int_0^t F_0(s,x(s,z,\e))ds=&\int_{0}^{t}\big[F_0(s,x(s,z,0))+\e D_x
F_0(s,x(s,z,0))\vspace{0.2cm}\\
&\cdot y_1(s,z)\big]ds+E_2(\e)+\CO(\e^2).
\end{array}
\end{equation}
Here
\[
\begin{array}{RL}
E_2(\e)=&\sum_{i=1}^{\ov{\kappa}-1}\left(\int_{t^{i-1}(z,\e)}^{t^{i-1}(z,0)}
\big[F_0^{n_i}(s,x^i(s,z,0))+\e D_x
F_0^{n_i}(s,x^i(s,z,0))\dfrac{\p x^i}{\p\e}(t,z,0)\big]ds\right.\vspace{0.2cm}\\

&\left.-\int_{t^{i}(z,\e)}^{t^{i}(z,0)}\big[F_0^{n_i}(s,x(s,z,0))+\e D_x
F_0^{n_i}(s,x^i(s,z,0))\dfrac{\p x^i}{\p\e}(t,z,0)\big]ds \right)\vspace{0.2cm}\\

&+ \int_{t^{\ov{\kappa}-1}(z,\e)}^{t^{\ov{\kappa}-1}(z,0)}\big[F_0^{n_{\ov{\kappa}}}(s,x^i(s,z,0))+\e D_x
F_0^{n_{\ov{\kappa}}}(s,x^i(s,z,0))\dfrac{\p x^i}{\p\e}(t,z,0)\big]ds.
\end{array}
\]
It is easy to see that there exists a constant $\widehat E$ such that
\begin{equation}\label{E}
||E_2(\e)||\leq \widehat E \sum_{i=0}^{\ov{\kappa}-1}|t^i(z,0) - t^i(z,\e)|.
\end{equation}
So from Claim \ref{c1} it follows that $E_2(\e)=o(\e)$. Going back to inequality \eqref{ine1} we have
\begin{equation}\label{ine4}
\begin{array}{RL}
\int_0^t F_0(s,x(s,z,\e))ds=&\int_{0}^{t}F_0(s,x(s,z,0))ds\vspace{0.2cm}\\
&+\e \int_{0}^{t} D_x F_0(s,x(s,z,0))y_1(s,z)ds+o(\e).
\end{array}
\end{equation}

So from \eqref{x}, \eqref{FF1}, and \eqref{ine4} we conclude that
\begin{equation}\label{sb1}
\begin{array}{RL}
x(t,z,\e)=&z+\int_0^t F_0(s,x(s,z,0))ds\vspace{0.2cm}\\
&+\e\int_0^t\big[ D_x
F_0(s,x(s,z,0))y_1(s,z)+F_1(s,x(s,z,0))\big]ds+o(\e)\vspace{0.3cm}\\

=&x(s,z,0)+\e y_1(t,z)+o(\e).
\end{array}
\end{equation}
The last equality 
is a simple consequence of the computations made in Claim \ref{c1}. Indeed from \eqref{ind5} and Claim \ref{c1} if $t^{\ell-1}(z,0)\leq t\leq t^{\ell}(z,0)$, then
\[
y_1(t,z)=y_1(t^{\ell-1}(z,0),0)+\int_{t^{\ell-1}(z,0)}^t\big[D_xF_0(s,x(s,z,0))y_1(s,z)+F_1(s,x(s,z,0))\big]ds.
\]
From here, proceeding by induction on $\ell$, we obtain that
\[
y_1(t,z)=\int_0^t\big[ D_xF_0(s,x(s,z,0))y_1(s,z)+F_1(s,x(s,z,0))\big]ds
\]

This completes the proof of Claim \ref{c3} and, consequently, the proof of the lemma.
\end{proof}

\begin{lemma}\label{l2}
Under the hypothesis of Theorem \ref{MRt1} there exists a compact subset $Z$ of $C$ with $\CZ\subset Z^{\circ}$ such that the solution $x(t,z,0)$ of the unperturbed differential system \eqref{ups} is $\C^1$ in the variable $z$ for every $z\in Z$.  Moreover $(\p x/\p z)(t,z,0)=Y(t,z)Y(0,z)^{-1}$. The set $Z$ is defined in the statement of Lemma \ref{l1} and $Y$ is the fundamental matrix solution of \eqref{lin}.
\end{lemma}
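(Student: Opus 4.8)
The plan is to reduce everything to the standard theory of smooth dependence on initial conditions applied piece by piece to the finitely many smooth arcs that make up the solution $x(t,z,0)$, and then to glue these pieces across the crossing points by differentiating the transition relations. Fix the compact set $Z\subset C$ produced in Lemma \ref{l1}; after shrinking $Z$ (staying with $\CZ\subset Z^{\circ}$) we may assume that for every $z\in Z$ the unperturbed solution $t\mapsto x(t,z,0)$ hits $\Sigma$ exactly at the crossing instants $t^0(z,0)=0<t^1(z,0)<\cdots<t^{\kappa_z}(z,0)=T$, with a fixed number $\kappa_z=\kappa$ of them, and that each $t^i(\cdot,0)$ is a $\CC^1$ function on a neighborhood of $z$ in $Z$ — this is exactly what the Implicit Function Theorem argument of Claim \ref{c1} gives (specialized to $\e=0$, where $H_i(t,\zeta,0)=h_i(t,x^i(t,\zeta,0))$ has $\partial H_i/\partial t\neq0$ at $t^i(z,0)$ because the intersection with $\Sigma^c$ is transverse, cf. \eqref{Yh}).

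Next I would argue arc by arc. On the first arc, $x^1(t,z,0)=\f_{F_0^{n_1}}(t,z)$ solves a $\CC^1$ (indeed $\CC^m$) ODE with initial condition $x^1(0,z,0)=z$, so by the classical smooth-dependence theorem $t\mapsto x^1(t,z,0)$ is $\CC^1$ in $z$ and $M_1(t,z):=(\p x^1/\p z)(t,z,0)$ is the fundamental matrix of the variational equation $\dot M=D_xF_0^{n_1}(t,x^1(t,z,0))M$ with $M_1(0,z)=\mathrm{Id}$; since on $[0,t^1(z,0)]$ this variational equation coincides with \eqref{lin}, we get $M_1(t,z)=Y(t,z)Y(0,z)^{-1}$ there. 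For the inductive step, suppose $x^{i}(\cdot,z,0)$ is $\CC^1$ in $z$ on $[t^{i-1}(z,0),t^i(z,0)]$ with $(\p x^i/\p z)(t,z,0)=Y(t,z)Y(0,z)^{-1}$. The transition relation \eqref{rec} at $\e=0$ reads $x^{i+1}(t^i(z,0),z,0)=x^i(t^i(z,0),z,0)$, and since $t^i(\cdot,0)$ is $\CC^1$, the right-hand side is a $\CC^1$ function of $z$; hence the Cauchy datum for the arc $x^{i+1}$, at the $z$-dependent initial time $t^i(z,0)$, depends $\CC^1$ on $z$. Invoking smooth dependence for the ODE \eqref{xi} (with $\e=0$, field $F_0^{n_{i+1}}$) with this $\CC^1$ datum and $\CC^1$ initial time gives that $x^{i+1}(\cdot,z,0)$ is $\CC^1$ in $z$; differentiating the flow and the relation $x^{i+1}(t^i(z,0),z,0)=x^i(t^i(z,0),z,0)$ with respect to $z$ and using the chain rule shows that the jump in $\p x/\p z$ across $t=t^i(z,0)$ is $\bigl(F_0^{n_{i+1}}(p^i_z)-F_0^{n_i}(p^i_z)\bigr)\,\nabla_z t^i(z,0)$, which is precisely the jump already built into the continuous piecewise-$\CC^m$ fundamental matrix $Y(t,z)$ of the linearized DPDS \eqref{lin} (that matrix is defined with exactly the same Filippov gluing). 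Therefore $(\p x^{i+1}/\p z)(t,z,0)=Y(t,z)Y(0,z)^{-1}$ on $[t^i(z,0),t^{i+1}(z,0)]$, closing the induction; evaluating at $i=\kappa$, $t=T$ gives the claimed formula.

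The step I expect to be the main obstacle is the gluing across the crossing points, i.e. verifying that the $z$-derivative does not merely exist on each closed arc but matches up to give a globally $\CC^1$ function of $z$ and that the matching jump is the one carried by $Y(t,z)$. Concretely one must differentiate through $x(t,z,0)=x^i(t^{i-1}(z,0),z,0)+\int_{t^{i-1}(z,0)}^{t}F_0(s,x(s,z,0))\,ds$ (the $\e=0$ analogue of the integral identity used throughout Lemma \ref{l1}), keeping careful track of the boundary terms coming from the $z$-dependence of the limits $t^{i-1}(z,0)$; these boundary terms are exactly what produces the jump $\bigl(F_0^{n_{i}}-F_0^{n_{i-1}}\bigr)\nabla_z t^{i-1}$, and one has to check this is consistent with the variational problem that \emph{defines} $Y(t,z)$ as a continuous piecewise-$\CC^m$ fundamental matrix. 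Once the bookkeeping is matched, the rest is a routine application of the classical theorems on differentiable dependence on initial data and parameters for smooth ODEs, exactly as invoked (for continuity) in the proof of Lemma \ref{l1}.
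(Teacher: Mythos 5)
Your overall strategy --- decompose the flow into $\kappa_z$ smooth arcs, invoke smooth dependence on initial data together with the $\CC^1$ crossing times $t^i(\cdot,0)$ from Claim~\ref{c1}, and induct --- is exactly the structure of the paper's proof (which writes $x^i(t,\zeta,0)=\f_{n_i}(t,t^{i-1}(\zeta,0),x^{i-1}(t^{i-1}(\zeta,0),\zeta,0))$ and uses that composition). The difference, and the gap, is in the gluing step, which you yourself flagged as the main obstacle. You correctly observe that differentiating the transition relation $x^{i+1}(t^i(z,0),z,0)=x^i(t^i(z,0),z,0)$ with respect to $z$ produces a boundary term, so that
\[
\frac{\p x^{i+1}}{\p z}\bigl(t^i(z,0),z,0\bigr)-\frac{\p x^{i}}{\p z}\bigl(t^i(z,0),z,0\bigr)=\bigl(F_0^{n_i}-F_0^{n_{i+1}}\bigr)(p^i_z)\,\nabla_z t^i(z,0).
\]
But the conclusion you draw from it --- that this jump is ``built into'' $Y(t,z)$ because that matrix ``is defined with exactly the same Filippov gluing'' --- is inconsistent with the half-sentence before it. The paper defines $Y(t,z)$ as a fundamental matrix of the linear system $y'=D_xF_0(t,x(t,z,0))\,y$, whose coefficient is only piecewise continuous in $t$; the solution of such a linear ODE is automatically continuous in $t$ with no saltation, and the paper says so explicitly (``$t\mapsto Y(t,z)$ is continuous piecewise $\CC^m$''). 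A continuous $t\mapsto Y(t,z)$ carries no jump, so once your boundary term is nonzero the formula $(\p x/\p z)(t,z,0)=Y(t,z)Y(0,z)^{-1}$ cannot hold across the crossing. The proposal therefore asserts two things that cannot both be true.

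By contrast, the paper's proof closes the induction by simply writing
\[
\frac{\p x^{\ell}}{\p z}\bigl(t^{\ell-1}(\zeta,0),\zeta,0\bigr)=\frac{\p x^{\ell-1}}{\p z}\bigl(t^{\ell-1}(\zeta,0),\zeta,0\bigr),
\]
i.e.\ it asserts that there is \emph{no} boundary-term jump, and does not carry out the chain-rule computation you performed. That is precisely the step your computation calls into question: continuity of $\p x/\p z$ across $t=t^i$ requires $\bigl(F_0^{n_i}-F_0^{n_{i+1}}\bigr)(p^i_z)\,\nabla_z t^i(z,0)=0$, which is clear when the crossing time does not depend on $z$ (for example when $\Sigma$ is of the form $\{t=\mathrm{const}\}$, as in Proposition~\ref{p1} where $\Sigma$ consists of $\{\T=0\}$, $\{\T=\pi\}$, $\{\T=2\pi\}$), but is not an obvious consequence of hypotheses $(H)$--$(H4)$ in general. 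So while your decomposition of the difficulty is sharper than the paper's, declaring that $Y$ absorbs the jump is not a valid way to discharge it; a correct repair would have to either derive $\nabla_z t^i(z,0)=0$ from the hypotheses, or replace $Y(T,z)Y(0,z)^{-1}$ by the product of the arc-wise fundamental matrices interleaved with the very saltation factors you wrote down.
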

\begin{proof}
It is easy to see that there exists a compact subset $Z$ of $C$ such that $\CZ\subset Z^{\circ}$. Given $z\in Z$ the solution of the unperturbed system \eqref{ups} starting at $z$ is given by \eqref{xx} by taking $\e=0$. Since $C\cap\p\Sigma_0=\emptyset$ and $Z\subset C$, there exists a neighborhood $U^0\subset C$ of $z$ such that for every $\zeta\in U^0$ the local flow of the unperturbed system \eqref{ups} starting at the point $\zeta$ is given by $x^1(t,\zeta,0)$. We know that $(t^i(z),x(t^i(z),z,0))\in\Sigma^c$ for $i=1,2,\ldots,\kappa_z-1$. Since $\Sigma^c$ is an open subset of $\Sigma$ we conclude that there exist $U^i\subset \Sigma^c$ neighborhoods of $x(t^i(z),z,0)$ in $\Sigma$ for $i=1,2,\ldots,\kappa_z-1$. For $i=\kappa_z$ we have that $x(t^{\kappa_z}(z),z,0)=z$, so we take $U^{\kappa_z}=U^0$. Moreover, for each $\zeta\in U^i$ the locally flow of the unperturbed system \eqref{ups} starting in $\zeta$ is given by $x^{i+1}(t,\zeta,0)$ for $i=1,2,\ldots,\kappa_z$. Therefore we can choose a small neighborhood $U_z\subset U^0$ such that for every $\zeta\in U_z$, $x(t^{i}(\zeta),\zeta,0)\in U^i$ for $i=1,2,\ldots,\kappa_z$. Hence we conclude that for each $z\in C$ there exists a small neighborhood $U_z\subset C$ of $z$ such that the solution $t\mapsto x(t,\zeta,0)$ can be written as \eqref{xx} for every $\zeta\in U_z$ having the same number $\kappa_z$ of $\C^1$ pieces.

\smallskip  

Let $\f_n(t,t_0,x_0)$ be the solution of the differential equation $x'=F_0^n(t,x)$ such that $\f_n(t_0,t_0,x_0)=x_0$. From the results of the differential dependence of the solutions we conclude that each of these functions are of class $\C^1$ in the variables $(t,t_0,x_0)$. Indeed the function $F_0^n$ is $\C^1$ for $i=1,2,\ldots,\kappa_z$. From Claim \ref{c1} of the proof of Lemma \ref{l1} for $i=1,2,\ldots,\kappa_z$ the function $t^i(\zeta,\e)$ is of class $\C^1$, for every $\zeta\in U_z$ and $\e\in[0,\e_0]$.

\smallskip

From \eqref{rec} we have that
\begin{equation}\label{in1}
\begin{array}{L}
x^1(t,\zeta,0)=\f_{n_1}(t,0,\zeta) \quad \textrm{and}\vspace{0.2cm}\\
x^i(t,\zeta,0)=\f_{n_i}(t,t^{i-1}(\zeta,0),x^{i-1}(t^{i-1}(\zeta,0),\zeta,0)),
\end{array}
\end{equation}
for $\zeta\in U_z$ and for $i=2,3,\ldots,\kappa_z$. So for $i=1$ the function $(t,\zeta)\mapsto x^1(t,\zeta,0)=\f_{n_1}(t,0,\zeta)$ is $\C^1$. Moreover
for $0\leq t\leq t^1(\zeta,0)$ we have that $\dfrac{\p x^1}{\p z}(t,\zeta,0)=Y(t,\zeta)$. Indeed from \eqref{xi} we have that
\begin{equation}\label{lab4}
\begin{array}{RL}
\dfrac{\p}{\p t}\left(\dfrac{\p x^1}{\p z}(t,\zeta,0)\right)&=D_x F_0^{n_1}(t,x^1(t,\zeta,0))\dfrac{\p x^1}{\p z}(t,\zeta,0)\vspace{0.2cm}\\
&=D_x F_0(t,x(t,\zeta,0))\dfrac{\p x^1}{\p z}(t,\zeta,0),
\end{array}
\end{equation}
for $0\leq t\leq t^1(\zeta,0)$. So solving the linear differential equation \eqref{lab4} we have that the $\dfrac{\p x^1}{\p z}(t,\zeta,0)$ is a fundamental matrix solution of system \eqref{lin} for $0\leq t\leq t^1(\zeta,0)$ and $\zeta\in U_z$. Since $\dfrac{\p x^1}{\p z}(0,\zeta,0)$ is the identity matrix, we conclude that $\dfrac{\p x^1}{\p z}(t,\zeta,0)=Y(t,\zeta) Y(0,z)^{-1}$ for $0\leq t\leq t^1(\zeta,0)$ and $\zeta\in U_z$. 

\smallskip

We assume by induction hypothesis that the function $\zeta\mapsto x^{\ell-1}(t,\zeta,0)$ is $\C^1$  for each $t\in\s^1$, and that for $t^{\ell-2}(\zeta,0)\leq t\leq t^{\ell-1}(\zeta,0)$ the equality $\dfrac{\p x^{\ell}}{\p z}(t,\zeta,0)=Y(t,\zeta) Y(0,z)^{-1}$ holds.

\smallskip

From \eqref{in1} we have that, for $i=\ell$, $x^{\ell}(t,\zeta,0)=\f_{n_{\ell}}(t,t^{\ell-1}(\zeta,0),x^{\ell-1}(t^{\ell-1}(\zeta,0),\zeta,$ $0))$. So the the function $\zeta\mapsto x^{\ell}(t,\zeta,0)$ is $\C^1$  because from the induction hypothesis it is composition of $\C^1$  functions. Now, we have
\[
\begin{array}{RL}
\dfrac{\p}{\p t}\left(\dfrac{\p x^{\ell}}{\p z}(t,\zeta,0)\right)&=D_x F_0^{n_{\ell}}(t,x^{\ell}(t,\zeta,0))\dfrac{\p x^{\ell}}{\p z}(t,\zeta,0)\vspace{0.2cm}\\
&=D_x F_0(t,x(t,\zeta,0))\dfrac{\p x^{\ell}}{\p z}(t,\zeta,0),
\end{array}
\]
for $t^{\ell-1}(\zeta,0)\leq t\leq t^{\ell}(\zeta,0)$. Solving this linear differential equation we get that 
\[
\begin{array}{RL}
\dfrac{\p x^{\ell}}{\p z}(t,\zeta,0)=&Y(t,\zeta)Y(t^{\ell-1}(\zeta,0),\zeta)^{-1}\dfrac{\p x^{\ell}}{\p z}(t^{\ell-1}(\zeta,0),\zeta,0)\vspace{0.2cm}\\
=&Y(t,\zeta)Y(0,\zeta)^{-1},
\end{array}
\]
for  $t^{\ell-1}(\zeta,0)\leq t\leq t^{\ell}(\zeta,0)$ and $\zeta\in U_z$. The last equality comes from the induction hypothesis because 
\[
\dfrac{\p x^{\ell}}{\p z}(t^{\ell-1}(\zeta,0),\zeta,0)=\dfrac{\p x^{\ell-1}}{\p z}(t^{\ell-1}(\zeta,0),\zeta,0)=Y(t^{\ell-1}(\zeta,0),\zeta)Y(0,\zeta)^{-1}.
\] 

\smallskip

The above induction proved that for every $z\in Z$, $x^i(t,z,0)$ is a $\C^1$ function in the second variable and $\dfrac{\p x^i}{\p z}(t,z,0)=Y(t,z)Y(0,z)^{-1}$, provided that $t^{i-1}\leq t\leq t^{i}$. We conclude the proof of the lemma by observing that for $z\in Z$ and $t\in\s^1$ there exists $\ell\in\{1,2,\ldots,\kappa_z\}$ such that $t^{\ell-1}(z,0)\leq t\leq t^{i}(z,0)$, hence $x(t,z,0)=x^{\ell}(t,z,0)$.
\end{proof}

\begin{lemma}\label{l3}
Under the hypotheses of Theorem \ref{MRt1} there exists a small parameter $\ov{\e}\in[0,\e_0]$ such that for every $\e\in[0,\ov{\e}]$ the function $z\mapsto x(T,z,\e)$ is locally Lipshchitz for $z\in Z$. The parameter $\e_0$ is defined in the statement of Lemma \ref{l1} and the set $Z$ is defined in the statement of Lemma \ref{l2}.
\end{lemma}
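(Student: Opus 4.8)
The plan is to follow the scheme of the proof of Lemma \ref{l2}, downgrading ``$\C^1$'' to ``locally Lipschitz'' at every place where the perturbation terms enter, since $F_1^n$ and $R^n$ are only assumed Lipschitz in $x$ and hence one cannot expect more. Set $\overline\e=\e_0$ and fix $z\in Z$. First I would recall from Lemma \ref{l1} and Claim \ref{c1} that there is a neighborhood $U_z\subset C$ of $z$ on which, for every $\e\in[0,\e_0]$, the solution $x(\cdot,\zeta,\e)$ admits the decomposition \eqref{xx} with a fixed number $\kappa_z$ of $\C^1$ pieces $x^i(t,\zeta,\e)$ attached to a fixed index sequence $n_1,\ldots,n_{\kappa_z}$, and on which each crossing time $\zeta\mapsto t^i(\zeta,\e)$ is of class $\C^1$, in particular locally Lipschitz.

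Next I would record the dependence-on-data fact that does the work. For $n=1,\ldots,N$ let $\psi_n(t,t_0,x_0,\e)$ denote the solution of $x'=F_0^n(t,x)+\e F_1^n(t,x)+\e^2R^n(t,x,\e)$ with $\psi_n(t_0,t_0,x_0,\e)=x_0$. On the compact set $\s^1\times K\times[0,\e_0]$, with $K$ and the bound $M$ and Lipschitz constant $L$ as fixed in the proof of Lemma \ref{l1}, this right-hand side is bounded by $M$ and $L$-Lipschitz in $x$; Gronwall's Lemma (for the $t_0$-dependence one first passes to a common base time using $\|\psi_n(t_0,t_0',x_0,\e)-x_0\|\le M|t_0-t_0'|$) then gives that $(t,t_0,x_0)\mapsto\psi_n(t,t_0,x_0,\e)$ is jointly locally Lipschitz with a Lipschitz constant uniform in $\e\in[0,\e_0]$, and it is moreover $\C^1$ in $t$ with $t$-derivative bounded by $M$.

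Then I would prove by finite induction on $i=1,\ldots,\kappa_z$ that $(t,\zeta)\mapsto x^i(t,\zeta,\e)$ is jointly locally Lipschitz on $\s^1\times U_z$, with Lipschitz constant uniform in $\e\in[0,\e_0]$, using the recurrence coming from \eqref{rec},
\[
x^1(t,\zeta,\e)=\psi_{n_1}(t,0,\zeta,\e),\qquad x^i(t,\zeta,\e)=\psi_{n_i}\big(t,t^{i-1}(\zeta,\e),x^{i-1}(t^{i-1}(\zeta,\e),\zeta,\e),\e\big).
\]
The case $i=1$ is the previous paragraph. For the inductive step, $\zeta\mapsto\big(t^{i-1}(\zeta,\e),x^{i-1}(t^{i-1}(\zeta,\e),\zeta,\e)\big)$ is locally Lipschitz, being the composition of the $\C^1$ map $t^{i-1}(\cdot,\e)$ with the jointly locally Lipschitz $x^{i-1}$ supplied by the induction hypothesis; composing it further with $\psi_{n_i}$ preserves local Lipschitzness in $\zeta$, uniformly in $t$, and joint local Lipschitzness in $(t,\zeta)$ then follows because $x^i$ is $\C^1$ in $t$ with $t$-derivative bounded by $M$. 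Taking $i=\kappa_z$ and $t=T=t^{\kappa_z}(\zeta,\e)$ yields that $\zeta\mapsto x(T,\zeta,\e)=x^{\kappa_z}(T,\zeta,\e)$ is locally Lipschitz on $U_z$; since $z\in Z$ was arbitrary, this is the assertion.

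I do not expect a genuine analytic obstacle here, the argument being the Lipschitz counterpart of Lemma \ref{l2}. The main care needed is the bookkeeping already carried out in Lemma \ref{l1}: keeping $\kappa_z$ and the indices $n_i$ constant on $U_z$ for all $\e\in[0,\e_0]$ (openness of $\Sigma^c$ and transversality of the crossings), and keeping every Lipschitz constant uniform in $\e$ by working throughout on the single compact set $\s^1\times K\times[0,\e_0]$.
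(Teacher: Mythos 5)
Your proposal is correct and follows essentially the same route as the paper: decompose the flow into its $\kappa_z$ smooth pieces via the flows $\psi_n$, obtain a uniform-in-$\e$ Lipschitz estimate for each $\psi_n$ in all of $(t,t_0,x_0)$ from Gronwall plus the flow property, use the $\C^1$ regularity of the crossing times $t^i(\cdot,\e)$ from Claim~\ref{c1}, and close by finite induction along the recurrence \eqref{rec}. The only cosmetic difference is that you take $\ov\e=\e_0$ outright by invoking Claim~\ref{c1} to keep $\kappa_z$ and the indices $n_i$ fixed on $U_z\times[0,\e_0]$, whereas the paper re-derives this uniformity (with a possibly smaller $\ov\e$) by continuous dependence starting from the $\e=0$ picture of Lemma~\ref{l2}; both justifications are valid.
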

\begin{proof}
From Lemma \ref{l2} we have that for each $z\in Z$ there exists a small neighborhood $U_z\subset C$ of $z$ such that the solution $t\mapsto x(t,\zeta,0)$ can be written as \eqref{xx} for every $\zeta\in U_z$ having the same number $\kappa_z$ of $\C^1$ pieces. Therefore applying the result of the continuous dependence of the solutions on the parameters in each differentiable piece we conclude that for each $z\in Z$ there exists a small neighborhood $\U_z\subset U_z$ and a small parameter $\ov{\e}_z\in(0,\e_0]$ such that the solution $t\mapsto x(t,\zeta,\e)$ can be written as \eqref{xx} for every $\zeta\in \U_z$ and for each $\e\in(0,\e_z]$ having the same number $\kappa_z$ of $\C^1$ pieces. Since $Z$ is a compact set we can choose $\ov{\e}$ a minimal parameter of $\ov{\e}_z$ for $z\in Z$ such that the above result holds for every $\e\in[0,\ov{\e}]$.

\smallskip

Let $\psi_n(t,t_0,x_0,\e)$ be the solution of the differential equation 
\begin{equation}\label{part}
x'=F^n(t,x)=F_0^n(t,x)+\e F_1^n(t,x)+\e^2 R^n(t,x,\e),
\end{equation}
such that $\psi_n(t_0,t_0,x_0,\e)=x_0$. Clearly $\psi_n(t,t_0, $ $x_0,0)=\f_n(t,t_0,x_0)$ which has been defined in Lemma \ref{l2}. From the result of the continuous dependence of the solutions on the initial conditions we conclude that each of these functions are continuous in the variables $(t,t_0,x_0)$. Indeed $F^n$ is a continuous function which is Lipschitz in the second variable for $i=1,2,\ldots,\kappa_z$. Moreover using the Gronwall Lemma (see, for instance, \cite{SVM}) we conclude that
\begin{equation}\label{lips}
||\psi_n(t,s_1,s_1,\e)-\psi_n(t,t_2,z_2,\e)||\leq Me^{LT}|t_1-t_2|+e^{LT}||x_1-x_2||,
\end{equation}
for each $t,s_1,s_2\in\s^1$, $z_1,z_2\in \U_z$, and $\e\in[0,\ov{\e}]$, 
where the constant $L$ and $M$ are defined in the proof of Lemma \ref{l1}. 
From the flow properties of the solutions of system \eqref{part} we have that the equality
\begin{equation}\label{flow}
\psi_n(t+s,t_0,x,\e)=\psi_n(t,t_0,\psi_n(s+t_0,t_0,x,\e),\e)
\end{equation}
holds for every $n=1,2,\ldots,N$.

\smallskip

Given $t_1,t_2,s_1,s_2\in\s^1$, $z_1,z_2\in U_z$ and $\e\in[0,\ov{\e}]$ we can prove that the inequality
\begin{equation}\label{mine}
\begin{array}{RL}
||\psi_n(t_1,s_1,z_1,\e)-\psi_n(t_2,s_2,z_2,\e)||\leq&Me^{LT}|t_1-t_2|+Me^{LT}|s_1-s_2|\vspace{0.2cm}\\
&+e^{LT}||z_1-z_2||.
\end{array}
\end{equation}
holds for $n=1,2,\ldots,N$. Indeed, from \eqref{lips} and \eqref{flow}
\[
\begin{array}{L}
||\psi_n(t_1,s_1,z_1,\e)-\psi_n(t_2,s_2,z_2,\e)||=\vspace{0.3cm}\\
||\psi_n(t_1,s_1,z_1,\e)-\psi_n(t_1,s_2,\psi_n(t_2-t_1+s_2,s_2,z_2,\e),\e)||\leq\vspace{0.3cm}\\
e^{LT}\Big(||z_1-\psi_n(t_2-t_1+s_2,s_2,z_2,\e)||+M|s_1-s_2|\Big)=\vspace{0.3cm}\\
e^{LT}\Big(||\psi_n(t_2-t_1+s_2,t_2-t_1+s_2,z_1,\e)-\psi_n(t_2-t_1+s_2,s_2,z_2,\e)||\vspace{0.2cm}\\
+M|s_1-s_2|\Big)\leq\vspace{0.3cm}\\
e^{LT}\left(||z_1-z_2||+M|t_1-t_2|+M|s_1-s_2|\right).
\end{array}
\]

\smallskip

Again from \eqref{rec} we obtain
\begin{equation}\label{in1}
\begin{array}{L}
x^1(t,\zeta,\e)=\psi_{n_1}(t,0,\zeta,\e) \quad \textrm{and}\vspace{0.2cm}\\
x^i(t,\zeta,0)=\psi_{n_i}(t,t^{i-1}(\zeta,\e),x^{i-1}(t^{i-1}(\zeta,\e),\zeta,\e),\e),
\end{array}
\end{equation}
for $\zeta\in \U_z$ and for $i=2,3,\ldots,\kappa_z$. Thus from \eqref{in1} for $i=1$ the function $x^1(t,\zeta,\e)=\f_{n_1}(t,0,\zeta)$. So from \eqref{mine} we have that
\[
\begin{array}{RL}
||x^1(t_1,z_1,\e)-x^1(t_2,z_2,\e)||=&||\psi_{n_1}(t_1,0,z_1,\e)-\psi_{n_1}(t_2,0,z_2,\e)||\vspace{0.2cm}\\
\leq&e^{LT}\left(||z_1-z_2||+M|t_1-t_2|\right),
\end{array}
\]
for every $z_1,z_2\in \U_z$, $0\leq t_1\leq t^1(z_1,\e)$, $0\leq t_2\leq t^1(z_2,\e)$, and $\e\in[0,\ov{\e}]$.

\smallskip

We assume by induction hypothesis that there exist constants $A_{\ell-1}$ and $B_{\ell-1}$ such that
\[
||x^{\ell-1}(t_1,z_1,\e)-x^{\ell-1}(t_2,z_2,\e)||\leq A_{\ell-1}|t_1-t_2|+B_{\ell-1}||z_1-z_2||,
\]
for every $z_1,z_2\in \U_z$, $t^{\ell-2}(z_1,\e)\leq t_1\leq t^{\ell-1}(z_1,\e)$, $t^{\ell-2}(z_2,\e)\leq t_2\leq t^{\ell-1}(z_2,\e)$, and $\e\in[0,\ov{\e}]$.

\smallskip

From \eqref{in1} we have, for $i=\ell$, that $x^{\ell}(t,\zeta,\e)=\psi_{n_{\ell}}(t,t^{\ell-1}(\zeta,\e),x^{\ell-1}(t^{\ell-1}(\zeta,\e),\zeta,$ $\e),\e)$ for $\zeta\in\U_z$, $t^{\ell-1}(\zeta,\e)\leq t\leq t^{\ell}(\zeta,\e)$ and $\e\in[0,\ov{\e}]$. So from induction hypothesis we obtain that
\begin{equation}\label{inequal1}
\begin{array}{L}
||x^{\ell}(t_1,z_1,\e)-x^{\ell}(t_2,z_2,\e)||=||\psi_{n_{\ell}}(t_1,t^{\ell-1}(z_1,\e),x^{\ell-1}(t^{\ell-1}(z_1,\e),z_1,$ $\e),\e)\vspace{0.2cm}\\
-\psi_{n_{\ell}}(t_2,t^{\ell-1}(z_2,\e),x^{\ell-1}(t^{\ell-1}(z_2,\e),z_2,$ $\e),\e)||\leq\vspace{0.3cm}\\

Me^{LT}|t_1-t_2|+Me^{LT}|t^{\ell-1}(z_1,\e)-t^{\ell-1}(z_2,\e)|\vspace{0.2cm}\\
+e^{LT}||x^{\ell-1}(t^{\ell-1}(z_1,\e),z_1,$ $\e)-x^{\ell-1}(t^{\ell-1}(z_2,\e),z_2,$ $\e)||\leq\vspace{0.3cm}\\

Me^{LT}|t_1-t_2|+e^{LT}(M+A_{\ell-1})|t^{\ell-1}(z_1,\e)-t^{\ell-1}(z_2,\e)|+e^{LT}B_{\ell-1}||z_1-z_2||
\end{array}
\end{equation}
for every $z_1,z_2\in \U_z$, $t^{\ell-1}(z_1,\e)\leq t_1\leq t^{\ell}(z_1,\e)$, $t^{\ell-1}(z_2,\e)\leq t_2\leq t^{\ell}(z_2,\e)$, and $\e\in[0,\ov{\e}]$.

\smallskip

From Claim 1 of the proof of Lemma \ref{l1} we have that $t^{\ell-1}(z,\e)$ is a $\C^1$ function, then there exists a constant $\de>0$ such that $|t^{\ell-1}(z_1,\e)-t^{\ell-1}(z_2,\e)|\leq \de ||z_1-z_2||$ for every $\e\in[0,\ov{\e}]$. Going back to the inequality \eqref{inequal1} we get
\[
||x^{\ell}(t_1,z_1,\e)-x^{\ell}(t_2,z_2,\e)||\leq A_{\ell}|t_1-t_2|+B_{\ell}||z_1-z_2||,
\]
for every $z_1,z_2\in \U_z$, $t^{\ell-1}(z_1,\e)\leq t_1\leq t^{\ell}(z_1,\e)$, $t^{\ell-1}(z_2,\e)\leq t_2\leq t^{\ell}(z_2,\e)$, and $\e\in[0,\ov{\e}]$, where $A_{\ell}=Me^{LT}$ and $B_{\ell}=e^{LT}\left(\de (M+A_{\ell-1})+B_{\ell-1}\right)$.

\smallskip

We conclude the proof of the lemma by observing that $x(T,z,\e)=x^{\kappa_z}(T,z,\e)$ which, from the above induction, is locally Lipschitz in the variable $z$.
\end{proof}

\begin{lemma}\label{l4}
Under the hypothesis of Theorem \ref{MRt2} the solution $x(t,z,\e)$ of the unperturbed differential system \eqref{ups} is $\C^2$ in the variable $z$ for every $z\in Z$.  Moreover $(\p x/\p z)(t,z,0)=Y(t,z)Y(0,z)^{-1}$. The set $Z$ is defined in the statement of Lemma \ref{l2} and $Y$ is the fundamental matrix solution of \eqref{lin}.
\end{lemma}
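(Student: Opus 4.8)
The plan is to rerun the finite induction from the proof of Lemma \ref{l2}, upgrading the regularity from $\CC^1$ to $\CC^2$ at every stage. Recall from that proof that, after shrinking $Z$ if necessary, each $z\in Z$ has a neighbourhood $U_z\subset C$ on which the unperturbed solution $t\mapsto x(t,\zeta,0)$ splits as in \eqref{xx} into $\kappa_z$ pieces $x^i(\cdot,\zeta,0)$, with $\kappa_z$ and the index sequence $(n_i)\subset\{1,\dots,N\}$ independent of $\zeta\in U_z$, and the pieces obey the recurrence $x^1(t,\zeta,0)=\f_{n_1}(t,0,\zeta)$ and $x^i(t,\zeta,0)=\f_{n_i}\bigl(t,t^{i-1}(\zeta,0),x^{i-1}(t^{i-1}(\zeta,0),\zeta,0)\bigr)$, where $\f_n(t,t_0,x_0)$ is the solution of $x'=F_0^n(t,x)$ with $\f_n(t_0,t_0,x_0)=x_0$. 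Under hypothesis $(h1)$ each $F_0^n$ is $\CC^2$, so by the classical theorem on the smooth dependence of solutions of ODEs on the initial time and the initial condition, $\f_n$ is $\CC^2$ jointly in $(t,t_0,x_0)$; similarly, since the boundaries of the $S_n$ are piecewise $\CC^2$, the local defining functions $h_i$ used in Claim \ref{c1} of Lemma \ref{l1} may be taken of class $\CC^2$.

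With these ingredients the induction on $i=1,\dots,\kappa_z$ is straightforward. For $i=1$, $x^1(t,\zeta,0)=\f_{n_1}(t,0,\zeta)$ is $\CC^2$ in $\zeta$. Assuming $\zeta\mapsto x^{i-1}(t,\zeta,0)$ is $\CC^2$ on $U_z$, the function $H_{i-1}(t,\zeta)=h_{i-1}(t,x^{i-1}(t,\zeta,0))$ is $\CC^2$; since the crossing is transversal, i.e. $\p H_{i-1}/\p t\neq 0$ at the crossing point (this was checked in Claim \ref{c1} of Lemma \ref{l1}), the Implicit Function Theorem produces the crossing time $t^{i-1}(\zeta,0)$ as a $\CC^2$ function of $\zeta$ on a possibly smaller neighbourhood. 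Hence $x^i(t,\zeta,0)=\f_{n_i}\bigl(t,t^{i-1}(\zeta,0),x^{i-1}(t^{i-1}(\zeta,0),\zeta,0)\bigr)$ is a composition of $\CC^2$ maps and is therefore $\CC^2$ in $\zeta$. Intersecting the finitely many neighbourhoods obtained for $i=1,\dots,\kappa_z$ and using the compactness of $Z$ gives a single neighbourhood valid for all pieces and shows that $z\mapsto x(t,z,0)$ is $\CC^2$ for every $z\in Z$ and $t\in\s^1$. The identity $(\p x/\p z)(t,z,0)=Y(t,z)Y(0,z)^{-1}$ then requires no new argument: once differentiability in $z$ is available it is the very identity established in Lemma \ref{l2}, whose proof (solving the variational equation \eqref{lin} piecewise and matching at the crossing times through \eqref{rec}) carries over word for word.

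The only genuinely delicate point is the apparent circularity between the regularity of the crossing times and that of the pieces: $x^i(\cdot,z,0)$ cannot be declared $\CC^2$ before $t^{i-1}(z,0)$ is known to be $\CC^2$, and the latter needs $x^{i-1}(\cdot,z,0)$ to be $\CC^2$. This is exactly what the finite induction resolves, since at step $i$ one invokes only objects with index strictly smaller than $i$, all of which have already been shown to be $\CC^2$. The remaining points --- that the Implicit Function Theorem is local in $\zeta$, so the neighbourhood may have to be shrunk a bounded number of times, and that the resulting local statements are patched over $Z$ by compactness --- are routine and entirely parallel to the corresponding passages in the proofs of Lemmas \ref{l1} and \ref{l2}.
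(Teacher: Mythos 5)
Your finite‑induction argument for the $\e=0$ case is correct and is precisely the line the paper takes, but you only establish that $z\mapsto x(t,z,0)$ is $\CC^2$. The lemma (and its role in the proof of Theorem~\ref{MRt2}) requires $\CC^2$ dependence on $z$ of the \emph{perturbed} solution $x(t,z,\e)$ for all small $\e$, not just $\e=0$: in the proof of Theorem~\ref{MRt1} one writes $f(z,\e)=P(z)+\e Q(z,\e)$ with $Q(z,\e)=\bigl(x(T,z,\e)-x(T,z,0)\bigr)/\e$, and Lemma~\ref{LS1} demands that $Q$ (hence $x(T,\cdot,\e)$ for every $\e$) be $\CC^2$. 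The statement's phrase ``unperturbed differential system'' is a misprint; the $\e$ in $x(t,z,\e)$ and the appeal to Lemma~\ref{l4} as a substitute for \emph{both} Lemma~\ref{l2} and Lemma~\ref{l3} make this clear.

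Concretely, the missing piece is the $\e$-dependent analogue of your recurrence: the paper replaces the unperturbed flow maps $\f_n(t,t_0,x_0)$ by the perturbed flow maps $\psi_n(t,t_0,x_0,\e)$ introduced in Lemma~\ref{l3}, which solve \eqref{part}. Under $(h1)$ the functions $F_0^n$, $F_1^n$, $R^n$ are $\CC^2$, so each $\psi_n$ is $\CC^2$ in $(t,t_0,x_0)$; likewise the implicit‑function argument of Claim~\ref{c1} (rerun with $\CC^2$ defining functions $h_i$) gives $\CC^2$ crossing times $t^i(z,\e)$, not merely $t^i(z,0)$. Plugging $\psi_{n_i}$ and $t^i(\zeta,\e)$ into the same recurrence \eqref{rec} and running your induction then yields $\CC^2$ dependence of $x^i(t,\zeta,\e)$ on $\zeta$ for every $\e\in[0,\e_0]$, which is what is actually needed. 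Your argument, as written, never touches $\psi_n$ or $t^i(\cdot,\e)$ for $\e>0$, so it proves strictly less than the lemma claims.
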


\begin{proof}
Assuming the hypothesis $(h1)$ instead $(H1)$ we can prove analogously to Claim \ref{c1} in the proof of Lemma \ref{l1} that given $z\in Z$ the function $t^i(z,\e)$ for $i=0,1,2,\cdots,\kappa_z$ is of class $\C^2$ for every $\zeta$ in a neighborhood $U_z\subset C$ of $z$ and $\e\in[0,\e_0]$. Then the proof of the lemma follows analogous the proof of Lemma \ref{l2} but considering the functions $\psi_n(t,t_0,x_0,\e)$ defined in Lemma \ref{l3}.
\end{proof}

The next two lemmas are versions of the so called {\it Lyapunov--Schmidt reduction} for finite dimensional function (see for instance \cite{C}) and its proof can be found in \cite{BLM} and \cite{BGL}, respectively. The first lemma will be used for proving Theorem \ref{MRt1}, and the second one will be used for proving Theorem \ref{MRt2}.

\begin{lemma}\label{LS}
Let $P:\R^d\rightarrow\R^d$ be a $\C^1$ function, and let $Q:\R^d\times[0,\e_0]\rightarrow\R^d$ be a continuous functions which is locally Lipschitz in the first variable, and define $f:\R^d\times[0,\e_0]\rightarrow\R^d$ as $f(z,\e)=P(z)+\e Q(z,\e)$. We assume that there exists an open and bounded subset $V\subset \R^k$ with $k\leq n$ and a $\CC^1$ function $\be_0:\ov V\rightarrow\R^{d-k}$ such that $P$ vanishes on the set $\CZ=\{z_{\al}=(\al,\be_0(\al)):\,\al\in\ov V\}$ and that for any $\al\in \ov V$ the matrix $D P(z_{\al})$ has in its upper right corner the null $k\times(d-k)$ matrix and in the lower corner the $(d-k)\times(d-k)$ matrix $\Delta_{\al}$ with $\det(\Delta_{\al})\neq0$. For any $\al\in\ov V$ we define $f_1(\al)=\pi Q(z_{\al},0)$. Thus if $f_1(\al)\neq0$ for all $\al\in\p V$ and $d(f_1,V,0)\neq 0$, then there exists $\e_1>0$ sufficiently small such that for each $\e\in(0,\e_1]$ there exists at least one $z_{\e}\in\R^d$ with $F(z_{\e},\e)=0$ and $\dis(z_{\e},\CZ)\to 0$ as $\e\to 0$. 
\end{lemma}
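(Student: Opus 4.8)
The plan is to perform a Lyapunov--Schmidt reduction, splitting $f(z,\e)=0$ into an \emph{auxiliary equation} and a \emph{bifurcation equation}, exactly as in the classical finite dimensional theory; the only point requiring care is that $f$ is $\C^1$ only through its leading term $P$, the perturbation $Q$ being merely continuous and locally Lipschitz in $z$. Writing points of $\R^d$ as pairs $(\al,\be)$ with $\al\in\R^k$ and $\be\in\R^{d-k}$, the equation $f(\al,\be,\e)=0$ is equivalent to the system
\[
\pi^{\perp}f(\al,\be,\e)=0 \quad\text{(auxiliary)},\qquad \pi f(\al,\be,\e)=0 \quad\text{(bifurcation)}.
\]

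First I would solve the auxiliary equation for $\be$ in terms of $(\al,\e)$ near $\CZ$. At $\e=0$ it is satisfied by $\be=\be_0(\al)$ for all $\al\in\ov V$, since $P$ vanishes on $\CZ$, and $\p_{\be}\,\pi^{\perp}P(z_{\al})=\Delta_{\al}$ is invertible. By compactness of $\ov V$, continuity of $\al\mapsto DP(z_{\al})$, and the $\C^1$ regularity of $\be_0$, the norms $\|\Delta_{\al}^{-1}\|$ are uniformly bounded, and the $\C^1$ map $\be\mapsto\pi^{\perp}P(\al,\be)$ is, uniformly in $\al$, a local diffeomorphism near $\be_0(\al)$ with uniformly Lipschitz local inverse $\Psi_{\al}$. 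Hence $\pi^{\perp}f(\al,\be,\e)=0$ becomes the fixed point equation $\be=\Psi_{\al}\big(-\e\,\pi^{\perp}Q(\al,\be,\e)\big)$, whose right--hand side is, for $\e$ small and $\be$ near $\be_0(\al)$, a uniform contraction in $\be$ (here I use only that $Q$ is Lipschitz in $z$, not differentiable). The uniform contraction principle with parameters $(\al,\e)$ then produces a continuous function $\be(\al,\e)$, defined for $\al$ in a neighbourhood of $\ov V$ and $\e\in[0,\e_1]$, with $\be(\al,0)=\be_0(\al)$ and $\be(\al,\e)=\be_0(\al)+\CO(\e)$ uniformly in $\al$.

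Substituting into the bifurcation equation, I set $g(\al,\e)=\pi f(\al,\be(\al,\e),\e)$, which is continuous in $(\al,\e)$. Since $\pi P$ vanishes identically on $\CZ$ and the upper right block $\p_{\be}\,\pi P(z_{\al})$ is the null matrix, a first order Taylor expansion of $\pi P$ about $\be_0(\al)$ gives $\pi P(\al,\be(\al,\e))=o\big(\|\be(\al,\e)-\be_0(\al)\|\big)=o(\e)$ uniformly on $\ov V$; and the continuity of $Q$ together with its Lipschitz dependence on $z$ gives $\e\,\pi Q(\al,\be(\al,\e),\e)=\e\,f_1(\al)+o(\e)$ uniformly on $\ov V$. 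Therefore $g(\al,\e)=\e\,f_1(\al)+o(\e)$, so for $\e>0$ the zeros of $g(\cdot,\e)$ in $V$ are exactly the zeros of $g_{\e}:=f_1+\rho_{\e}$ where $\rho_{\e}\to0$ uniformly on $\ov V$ as $\e\to0$. As $f_1$ does not vanish on $\p V$, neither does $g_{\e}$ for $\e$ small, the homotopy $(t,\al)\mapsto(1-t)f_1(\al)+t\,g_{\e}(\al)$ stays nonzero on $\p V$, hence $d_B(g_{\e},V,0)=d_B(f_1,V,0)\neq0$ and $g_{\e}$ has a zero $\al_{\e}\in V$. Then $z_{\e}:=(\al_{\e},\be(\al_{\e},\e))$ satisfies $\pi^{\perp}f(z_{\e},\e)=0$ and $\pi f(z_{\e},\e)=g(\al_{\e},\e)=0$, so $f(z_{\e},\e)=0$, while $\dis(z_{\e},\CZ)\le\|\be(\al_{\e},\e)-\be_0(\al_{\e})\|=\CO(\e)\to0$.

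The step I expect to be the main obstacle is the resolution of the auxiliary equation under the stated mixed regularity: the classical $\C^1$ implicit function theorem is not directly applicable because $Q$ need not be differentiable, so one must invert only the $\C^1$ part $P$ and close the argument with a uniform contraction, taking care that all the constants involved (the bounds on $\|\Delta_{\al}^{-1}\|$, the Lipschitz constants of $\Psi_{\al}$ and of $Q$, the radii of the neighbourhoods) can be chosen independently of $\al\in\ov V$ --- this is precisely where compactness of $\ov V$ enters. Once $\be(\al,\e)$ is available, the remaining steps are routine bookkeeping with Landau symbols and the standard degree continuation argument.
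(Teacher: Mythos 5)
The paper does not actually supply a proof of Lemma~\ref{LS}; it delegates it to the reference \cite{BLM} (and \cite{BGL} for the smooth version in Lemma~\ref{LS1}), so there is no internal proof to compare against. Evaluated on its own, your argument is correct, and it is the natural Lyapunov--Schmidt/contraction argument for this mixed-regularity statement --- almost certainly the one carried out in \cite{BLM}. The key step, and the one you rightly single out, is the resolution of the auxiliary equation $\pi^{\perp}f=0$: because $Q$ is only Lipschitz in $z$, the classical $\CC^1$ implicit function theorem does not apply, and inverting the $\CC^1$ leading part $\pi^{\perp}P(\al,\cdot)$ near $\be_0(\al)$ and then closing the equation by a parametrized uniform contraction is precisely the right substitute; compactness of $\ov V$, continuity of $\al\mapsto DP(z_\al)$, and invertibility of $\Delta_\al$ are exactly what make all the relevant constants ($\|\Delta_\al^{-1}\|$, the radius and Lipschitz constant of the local inverse $\Psi_\al$, the Lipschitz constant of $Q$) uniform in $\al$. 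The bifurcation step is also sound: $\pi P(\al,\be(\al,\e))=o(\e)$ uniformly on $\ov V$ follows from $\pi P(z_\al)=0$, the vanishing of the upper right block $\partial_\be\pi P(z_\al)$, the estimate $\be(\al,\e)-\be_0(\al)=\CO(\e)$, and uniform continuity of $DP$ near the compact set $\CZ$; and $\e\,\pi Q(\al,\be(\al,\e),\e)=\e f_1(\al)+o(\e)$ uses the Lipschitz bound in $z$ together with uniform continuity in $\e$. The degree homotopy on $\p V$ and the conclusion $\dis(z_\e,\CZ)\le\|\be(\al_\e,\e)-\be_0(\al_\e)\|=\CO(\e)$ then close the proof. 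The only cosmetic remark is that you need not extend $\be(\cdot,\e)$ beyond $\ov V$: the degree $d_B(g_\e,V,0)$ only requires $g_\e$ defined on $\ov V$ and nonvanishing on $\p V$, which you already have.
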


\begin{lemma}\label{LS1}
Let $P:\R^d\rightarrow\R^d$ and $Q:\R^d\times[0,\e_0]\rightarrow\R^d$ be $\C^2$ functions, and define $f:\R^d\times[0,\e_0]\rightarrow\R^d$ as $f(z,\e)=P(z)+\e Q(z,\e)$. We assume that there exists an open and bounded subset $V\subset \R^k$ with $k\leq n$ and a $\C^2$ function $\be_0:\ov V\rightarrow\R^{d-k}$ such that $P$ vanishes on the set $\CZ=\{z_{\al}=(\al,\be_0(\al)):\,\al\in\ov V\}$ and that for any $\al\in \ov V$ the matrix $D P(z_{\al})$ has in its upper right corner the null $k\times(d-k)$ matrix and in the lower corner the $(d-k)\times(d-k)$ matrix $\Delta_{\al}$ with $\det(\Delta_{\al})\neq0$. For any $\al\in\ov V$ we define $f_1(\al)=\pi Q(z_{\al},0)$. Thus if there exists $a\in V$ with $f_1(a)\neq0$ and $\det(f'(a))\neq0$, then there exists $\al_{\e}$ such that $f(z_{\al_{\e}},\e)=0$ and $z_{\al_{\e}}\to z_a$ as $\e\to 0$.
\end{lemma}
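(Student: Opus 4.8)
The plan is to perform the standard Lyapunov--Schmidt reduction. I split $\R^d=\R^k\times\R^{d-k}$, write a point as $z=(\al,\be)$ with $\al\in\R^k$ and $\be\in\R^{d-k}$, and correspondingly split $f=(\pi f,\pi^{\perp}f)$. The block hypothesis on $DP(z_{\al})$ then reads $\pi D_{\be}P(z_{\al})=0$ (the upper right corner) and $\pi^{\perp}D_{\be}P(z_{\al})=\Delta_{\al}$ (the lower right corner), with $\Delta_{\al}$ invertible. The strategy is: (i) use the invertibility of $\Delta_{\al}$ to solve $\pi^{\perp}f(\al,\be,\e)=0$ for $\be=\be(\al,\e)$; (ii) substitute to obtain a reduced equation $g(\al,\e)=\pi f(\al,\be(\al,\e),\e)=0$ in the variable $\al$ alone, and show $g(\al,\e)=\e\big(f_1(\al)+o(1)\big)$; (iii) solve the reduced equation near $a$ using $f_1(a)=0$ and $\det(f_1'(a))\neq0$. (Here I read the nondegeneracy hypothesis as $f_1(a)=0$ together with $\det(f_1'(a))\neq0$, which is exactly condition $(h4)$ of Remark \ref{bc}.)

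\textbf{Eliminating $\be$.} Since $P$ vanishes on $\CZ$ we have $\pi^{\perp}f(\al,\be_0(\al),0)=\pi^{\perp}P(z_{\al})=0$, and at the base point $D_{\be}\big(\pi^{\perp}f\big)(a,\be_0(a),0)=\pi^{\perp}D_{\be}P(z_a)=\Delta_a$ is invertible. As $f$ is $\C^2$, the Implicit Function Theorem yields a neighborhood $A_0\subset V$ of $a$, a number $\e_1\in(0,\e_0]$, and a $\C^2$ map $\be:A_0\times[0,\e_1]\to\R^{d-k}$ with $\be(a,0)=\be_0(a)$ such that, near the base point, $\pi^{\perp}f(\al,\be,\e)=0$ holds if and only if $\be=\be(\al,\e)$. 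Shrinking $A_0$ if necessary, the points $(\al,\be_0(\al),0)$ for $\al\in A_0$ lie in this neighborhood and satisfy $\pi^{\perp}f=0$, so uniqueness forces $\be(\al,0)=\be_0(\al)$ on $A_0$.

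\textbf{The reduced equation and its solution.} Put $g(\al,\e)=\pi f(\al,\be(\al,\e),\e)$, a $\C^2$ function; solving $f(z,\e)=0$ near $\CZ$ reduces to solving $g(\al,\e)=0$, the corresponding zero of $f$ being $z=(\al,\be(\al,\e))$. Because $g(\al,0)=\pi P(z_{\al})=0$ on $A_0$, we may write $g(\al,\e)=\e\,h(\al,\e)$ with $h(\al,\e)=\int_0^1(\p g/\p\e)(\al,s\e)\,ds$, which is $\C^1$. The key computation is
\[
\frac{\p g}{\p\e}(\al,0)=\pi D_{\be}P(z_{\al})\,\frac{\p\be}{\p\e}(\al,0)+\pi Q(z_{\al},0),
\]
obtained by differentiating $g=\pi\big[P(\al,\be(\al,\e))+\e Q(\al,\be(\al,\e),\e)\big]$ at $\e=0$ and using $\be(\al,0)=\be_0(\al)$. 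The first term vanishes since $\pi D_{\be}P(z_{\al})$ is the null upper right corner of $DP(z_{\al})$; hence $h(\al,0)=\pi Q(z_{\al},0)=f_1(\al)$ on $A_0$. Now $h$ is $\C^1$ with $h(a,0)=f_1(a)=0$ and $D_{\al}h(a,0)=f_1'(a)$ invertible, so a second application of the Implicit Function Theorem gives a $\C^1$ branch $\al_{\e}$, defined for small $\e\geq0$, with $\al_0=a$ and $h(\al_{\e},\e)=0$. Then $z_{\e}:=(\al_{\e},\be(\al_{\e},\e))$ satisfies $\pi^{\perp}f(z_{\e},\e)=0$ and $\pi f(z_{\e},\e)=\e\,h(\al_{\e},\e)=0$, so $f(z_{\e},\e)=0$; and $z_{\e}\to(a,\be_0(a))=z_a$ as $\e\to0$, which is the assertion.

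\textbf{Main obstacle.} None of the steps is genuinely hard; the one point requiring care is the structural observation in the middle step, namely that the upper right corner of $DP(z_{\al})$ being zero annihilates the contribution of the a priori unknown derivative $\p\be/\p\e$ to $(\p g/\p\e)(\al,0)$, so that the reduced leading term is exactly $f_1$. Beyond that, the only thing to monitor is the regularity bookkeeping: the hypothesis $f\in\C^2$ is precisely what is needed so that the first Implicit Function Theorem produces a $\C^2$ function $\be$, hence a $\C^2$ function $g$, hence a $\C^1$ function $h$ to which the second Implicit Function Theorem applies.
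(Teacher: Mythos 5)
Your proof is correct, and it is essentially the argument the paper relies on: the paper does not prove Lemma \ref{LS1} itself but refers to \cite{BGL}, and your two applications of the Implicit Function Theorem (first solving $\pi^{\perp}f=0$ for $\be(\al,\e)$ via the invertibility of $\Delta_{\al}$, then factoring $g(\al,\e)=\e h(\al,\e)$ and using that the vanishing upper-right block of $DP(z_{\al})$ annihilates the $\p\be/\p\e$ contribution so that $h(\cdot,0)=f_1$) is precisely that Lyapunov--Schmidt reduction. Your reading of the statement's misprints, namely $f_1(a)=0$ and $\det(f_1'(a))\neq0$ as in hypothesis $(h4)$ of Remark \ref{bc}, and of $z_{\al_{\e}}$ as the actual zero $(\al_{\e},\be(\al_{\e},\e))\to z_a$, is the intended one.
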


Now we are ready to prove our main results.

\begin{proof}[Proof of Theorem \ref{MRt1}]
We consider the $\C^1$ function $f:Z\times [0,\e_0]\rightarrow \R^d$, given by
\begin{equation}\label{f}
f(z,\e)=x(T,z,\e)-z.
\end{equation}
Its differentiability comes from Lemma \ref{l2}. Clearly system \eqref{MRs1} for $\e=\ov{\e}\in[0,\e_0]$ has a periodic solution passing through
$\ov z\in C$ if and only if $f(\ov z,\ov{\e})=0$.
 
\smallskip


From Lemma \ref{l1} we have that
\begin{equation}\label{xtay}
x(t,z,\e)=x(t,z,0)+\e y_1(t,z)+o(\e).
\end{equation}
Taking $P(z)=x(t,z,0)-z$ and $Q(z,\e)=y_1(t,z)+\tilde{o}(\e)$, thus $f(z,\e)=P(z)+\e Q(z,\e)$. Moreover from Lemma \ref{l2} $P(z)$ is a $\C^1$ function, and from Lemma \ref{l3} $Q(z,\e)$ is a continuous function which is locally Lipschitz in the first variable because $Q(z,\e)=(x(T,z,\e)-x(T,z,0))/\e$.

In order to apply Lemma \ref{LS} to function \eqref{f} we compute
\[
P(z_{\al})=x(T,z_{\al},0)-z_{\al}=0,
\]
and
\[
\begin{array}{RL}
\dfrac{\p P}{\p z}(z_{\al})=&\dfrac{\p x}
{\p z}(T,z_{\al},0)-Id\vspace{0.2cm}\\
=&Y_{\al}(T)Y_{\al}(0)^{-1}-Id.
\end{array}
\]
So from hypothesis $(H)$ the function $P$ vanishes on the set $\CZ$ and from hypothesis $(H2)$ for any $\al\in\ov V$ the matrix $DP(z_{\al})$ has in its upper right corner the null $k\times(d-k)$ matrix and in the lower corner the $(d-k)\times(d-k)$ matrix $\Delta_{\al}$ with $\det(\Delta_{\al})\neq 0$. Since $\pi Q(\al,\be_0(\al))=\pi y_1(T,z_{\al})=f_1(\al)$, so the proof follows applying Lemma \ref{LS}.
\end{proof}

\begin{proof}[Proof of Theorem \ref{MRt2}]
The proof is analogous to the proof of Theorem \ref{MRt1} applying Lemma \ref{l4} instead of Lemmas \ref{l2} and \ref{l3}, and applying Lemma \ref{LS1} instead of Lemma \ref{LS}. 
\end{proof}
\section{Proof of Proposition \ref{p1}}\label{PP}

\begin{proof}[Proof of Proposition \ref{p1}]
Proceeding with the change of variables $(u,v,w)=(r\cos\T,$ $r\sin\T,z)$ and taking $\T$ as the new time by doing $r'=\dot{r}/\dot{\T}$ and $z'\dot{z}/\dot{\T}$ we obtain
\begin{equation}\label{qs1}
(r',z')=\left\{
\begin{array}{L}
(0,z)+\e\,G^+(\T,r,z)+\CO(\e^2) \quad \textrm{if}\quad 0\leq \T\leq \pi, \\
(0,z)+\e\,G^-(\T,r,z)+\CO(\e^2) \quad \textrm{if}\quad \pi\leq\T\leq 2\pi, \\
\end{array}\right.
\end{equation}
where $G^{\pm}=\left(G^{\pm}_1,G^{\pm}_2\right)$, and
\[
\begin{array}{RL}
G_1^{\pm}=&b_1^{\pm} r\cos^2\T+\left(a_1^{\pm}+d_1^{\pm}z+(b_2^{\pm}+c_1^{\pm})r\sin\T\right)\cos\T\vspace{0.2cm}\\
&+\left(a_2^{\pm}+d_2^{\pm}z+c_2^{\pm} r\sin\T\right)\sin\T,\vspace{0.3cm}\\

G_2^{\pm}=&\dfrac{1}{r}\left(r(a_3^{\pm}+d_3^{\pm}z)-b_2^{\pm}rz\cos^2\T+(C_3^{\pm}r^2+(a_1^{\pm}+d_1^{\pm}z)z+c_1^{\pm}r\sin\T)\sin\T\vspace{0.2cm}\right.\\
&\left.(b_3^{\pm}r^2-(a_2^{\pm}+d_2^{\pm})z+(b_1^{\pm}-c_2^{\pm})rz\sin\T)\cos\T\right).
\end{array}
\]
Here the prime denotes the derivative with respect to $\T$.

\smallskip

For system \eqref{qs1} we have that $D=\{(r,z):\,r>0,\,z\in\R\}$ and $T=2\pi$. We note that $\Sigma=\{(0,r):\,r>0\}\cup\{(\pi,r):\,r>0\}\cup\{(2\pi,r):\,r>0\}$, thus taking $h(\T,r,z)=\T(\T-\pi)(\T-2\pi)$ it follows that $\Sigma=h^{-1}(0)$.

\smallskip

In what follows we shall study the elements of hypothesis $(H)$ of Theorem \ref{MRt1}. For $\e=0$ the solution $x(\T,r,z,0)$ of system \eqref{qs1} such that $x(0,r,z,0)=(r,z)$ is given by $x(\T,r,z,0)=(r,e^{\T}z)$. Taking $V=\{r\in\R:\,r_1<\al<r_2\}$ with $r_1>0$ arbitrarily small and $r_2>r_1$ arbitrarily large, and $\be_0=0$ we have that the solution $x_{\al}(\T)=(\al,0)$ is constant for every $\al\in\ov V$, particularly $2\pi$--periodic. In this case the manifold $\CZ$ of periodic solution of the system \eqref{qs1} when $\e=0$ is given by $\CZ=\{(\al,0):\,r_1\leq\al\leq r_2\}$, and $\Sigma_0=D$. Since $\CZ\subset \Sigma_0$ it follows that $\CZ\cap\p\Sigma_0=\emptyset$. Moreover computing the crossing region of system \eqref{qs1} for $\e>0$ sufficiently small we conclude that $\Sigma^c=\Sigma$, so we obtain that $\widetilde{\CZ}_0\cap\Sigma\subset\Sigma^c$. Therefore hypothesis $(H)$ hods for system \eqref{qs1}.

\smallskip

Hypothesis $(H1)$ of Theorem \ref{MRt1} clearly holds for system \eqref{qs1}. To verify hypothesis $(H2)$ we take
\[
Y(\T,r,z)=\dfrac{\p x}{\p z}(\T,r,z,0)=\left(\begin{array}{CC}1&0\\0&e^{\T}\end{array}\right)
\]
as the fundamental matrix solution of system \eqref{lin} in the case of system \eqref{qs1}. So
\[
Y_{\al}(2\pi)Y_{\al}(0)^{-1}-Id=Y(2\pi,\al,0)Y(0,\al,0)^{-1}-Id=\left(\begin{array}{CC}0&0\\0&e^{2\pi}-1\end{array}\right).
\]
Since $\Delta_{\al}=e^{2\pi}-1\neq 0$ for every $\al\in\ov V$ it follows that hypothesis $(H2)$ holds for system \eqref{qs1}.

\smallskip

Now if $(\T,r,z)\in\Sigma$, then $\T\in\{0,\pi\}$. On the other hand $\nabla h(0,r,z)=(2\pi^2,0,0)$ and $\nabla h(\pi,r,z)=(-\pi^2,0,0)$ for every $(r,z)\in D$. So $\langle\nabla h(\T,r,z),(0,v)\rangle=0$ for every $\T\in\{0,\pi\}$, $(r,z)\in D$, and $v\in\R^2$, which means that for any $v\in\R^2$ we have that $(0,v)\in T_{(\T,r,z)}\Sigma$ for every $\T\in\{0,\pi\}$ and $(r,z)\in D$. In short hypothesis $(H3)$ holds for system \eqref{qs1}.

\smallskip

Using an algebraic manipulator as Mathematica or Maple we compute
\[
f_1(\al)=\dfrac{\pi}{2}\left(b_1^++b_1^-+c_2^++c_2^-\right)\al+2\left(a_2^+-a_2^-\right).
\]
From hypotheses $\left(b_1^++b_1^-+c_2^++c_2^-\right)\left(a_2^--a_2^+\right)>0$, thus
\[
a=\dfrac{4\left(a_2^--a_2^+\right)}{\pi\left(b_1^++b_1^-+c_2^++c_2^-\right)}
\]
is a solutions of the equation $f_1(\al)=0$ such that $f_1'(a)\neq0$. From Remark \ref{bc} it is a sufficient condition to guarantee the existence of a small neighborhood $W\subset V$ of $a$ such that $d(f_1,W,0)\neq 0$. Since $f_1$ is linear, it is clear that $f_1(\al)\neq 0$ for every $\al\in\p W$. Therefore hypothesis $(H4)$ of Theorem \ref{MRt1} holds for system \eqref{qs1}.

\smallskip

Now the proof of the proposition follows directly by applying Theorems \ref{MRt1} and \ref{MRt2}.
\end{proof}

\section*{Acknowledgements}

The first author is partially supported by a MINECO/FEDER grant
MTM2008--03437, an AGAUR grant number 2014SGR 568, an ICREA
Academia, FP7--PEOPLE--2012--IRSES--316338 and 318999, and
FEDER/UNAB10--4E--378. The second author is partially supported by a
FAPESP grant 2013/16492--0 and by a CAPES CSF-PVE grant 88881.030454/2013-01.

\end{document}